\newtheorem{thm}{Theorem}
\newtheorem{lem}[thm]{Lemma}
\newtheorem{prop}[thm]{Proposition}
\theoremstyle{definition}
\newtheorem{defn}[thm]{Definition}
\newtheorem{rem}[thm]{Remark}
\newtheorem{example}[thm]{Example}
\numberwithin{equation}{section}
\newcommand{\Id}{\operatorname{Id}}
\newcommand{\R}{\ensuremath{\mathbb R}}    
\newcommand{\C}{\ensuremath{\mathbb C}}    
\newcommand{\N}{\ensuremath{\mathbb N}}    
\newcommand{\<}{\langle}
\renewcommand{\>}{\rangle}
\newcommand{\calA}{\mathcal A}
\newcommand{\calH}{\mathcal H}
\newcommand{\calN}{\mathcal N}
\newcommand{\calT}{\mathcal T}         
\newcommand{\calU}{\mathcal U}
\newcommand{\calY}{\mathcal Y}
\newcommand{\eps}{\varepsilon}
\newcommand{\vphi}{\varphi}
\newcommand{\bvec}[2]
{
	\begin{bmatrix}
		#1\\
		#2
	\end{bmatrix}
}
\renewcommand{\Im}{\operatorname{Im}}
\renewcommand{\Re}{\operatorname{Re}}
\newcommand{\ran}{\operatorname{ran}}
\newcommand{\sra}{\rightarrow}
\newcommand{\Sra}{\Rightarrow}
\newcommand{\ol}{\overline}
\newcommand{\wt}{\widetilde}
\newcommand{\ptl}{\partial}
\begin{document}
\title{Turnpike property for generalized linear-quadratic problems}
\author[1]{Zhuqing Li}
\author[2,*]{Roberto Guglielmi}
\affil[1]{Department of Mechanical and Aerospace Engineering, University of California San Diego}
\affil[2]{Department of Applied Mathematics, University of Waterloo}
\affil[*]{\url{roberto.guglielmi@uwaterloo.ca}}

\maketitle

\begin{ietabstract}
We deduce a sufficient condition of the exponential (integral) turnpike property for infinite dimensional generalized linear-quadratic optimal control problems in terms of structural properties of the control system, such as exponential stabilizability and detectability. The proof relies on the analysis of the exponential convergence of solutions to the differential Riccati equations to the algebraic counterpart, and on a necessary condition for exponential stabilizability in terms of a closed range test. 
\end{ietabstract}

The authors state that they have followed the ethics and integrity policies on the website for author guidelines. In particular, the data that support the findings of this study are available from the corresponding author upon reasonable request. The authors acknowledge the support of the Natural Sciences and Engineering Research Council of Canada (NSERC), funding reference number RGPIN-2021-02632. The authors have no conflict of interests related to this publication.

\begin{ietbody}
\section{Introduction}

The turnpike phenomenon characterizes the long-term behavior of optimally controlled systems, where the optimal trajectories over an extended time horizon closely align with a predefined trajectory of the system. The term turnpike property was first coined by Samuelson~\cite{SAMUELSON}, referring to the property of an efficiently growing economy to approximate for most of the time horizon a balanced equilibrium path, as observed by von Neumann~\cite{Neumann}. Subsequently, turnpike phenomena have been extensively explored in various domains, including mathematical economics~\cite{MCKENZIE}, mathematical biology~\cite{BIO}, and chemical processes~\cite{CHE}.

Over the last decade, the mathematical community has thoroughly investigated turnpike properties in optimal control, particularly in connection with the stability of Model Predictive Control schemes~\cite{discreteexp,RHC}. These investigations cover linear and nonlinear settings, discrete-time systems~\cite{FINDIS,FINNON}, finite-dimensional continuous-time systems~\cite{FINCON,TreZuaFIN}, delay equations~\cite{DELAY}, infinite-dimensional systems with control inside the domain~\cite{INFICAR,Zua,TreZua}, optimal relaxed problems~\cite{OPREL}, and constrained problems~\cite{Boundary}. Comprehensive overviews of turnpike properties in various optimal control and variational problems can be found in the monographs~\cite{ZAS06,ZAS14,ZAS15}.

In most instances, the trajectory approximated by the optimally controlled system corresponds to the minimizer of the corresponding optimal steady state problem, representing an optimal steady state of the system. However, turnpike phenomena have been observed towards different target states, such as suitable periodic orbits~\cite{TreZua,Periodicstrict,SAMUELSONperiodic}.

Numerous methods have been developed to establish sufficient conditions for turnpike properties, broadly categorized into two techniques. The first technique leverages hyperbolicity around the steady state of the optimality system resulting from Pontryagin's maximum principle. These results are instrumental in proving the integral and exponential integral turnpike property (see Definition~\ref{turnpikedef}) of the optimal state, control, and adjoint state towards the solution of the corresponding optimal steady state problem~\cite{TreZuaFIN,Zua,TreZua}.

The second method exploits the connection between turnpike and dissipativity properties of the control system. Dissipativity, introduced by Willems~\cite{Will72a,Will72b}, characterizes the abstract energy balance of a dynamical system in terms of stored and supplied energy. Strict dissipativity and strict pre-dissipativity, useful in characterizing turnpike properties, have been introduced for this purpose~\cite{Grue13,GruM16,survey}. This approach enables the establishment of the measure turnpike property, ensuring that the Lebesgue measure of the set where the optimal trajectory deviates significantly from the turnpike reference is bounded by a constant independent of the time horizon. While the measure turnpike property is more general than the exponential integral turnpike, it does not provide the same quantitative estimates on the control and adjoint state. Further details and a comparison between measure and integral turnpike properties can be found in~\cite{MEAINT}.

The linear-quadratic (LQ) optimal control problem is particularly intriguing in the study of turnpike properties due to its specific structure, allowing for a more explicit analysis. Extensive studies of this case have been conducted for finite-dimensional LQ optimal control problems in discrete and continuous time, including the presence of state and control constraints~\cite{GruG18,GruG21}.

This paper aims to generalize the results presented in~\cite{Zua} within a different framework. Specifically, we focus on the generalized linear-quadratic optimal control problem, where the quadratic term of the running cost function need not be strictly positive, and the linear terms of the running cost function can be arbitrarily chosen. In this sense, our problem setting does not restrict to optimal control problems of tracking type. As a consequence, the existence of a minimizer of the optimal steady state problem does not directly imply the existence of an adjoint steady state as in \cite{Zua}. This issue is further discussed in Remark \ref{adjointexist} and Appendix \ref{appenB}, which provides a counterexample to the existence of an adjoint steady state. 

The general framework considered in this paper requires to rely on a suitable semigroup framework, offering a distinct perspective on the turnpike property for infinite-dimensional problems. Due to the different functional framework, several proofs from~\cite{Zua} require reworking. For instance, we develop a closed range test to ensure the existence of the adjoint steady state based on the stabilizability of the control system. This test is utilized to derive the representation formula of the explicit solution of the generalized LQ problem. To our knowledge, this formula is not known in the literature and holds intrinsic interest. Furthermore, our method leverages the analytic properties of evolution systems and a suitable class of perturbations of the optimal control problem to deduce a key estimate on the exponential convergence of the solutions to the differential Riccati equations to its algebraic counterpart. This process holds promise for generalization to the unbounded observation case, as discussed in~\cite[Section 4.3]{thesis}. We anticipate addressing this issue for admissible observation operators in future work.

The remainder of the paper is organized as follows: In Section~\ref{Probsetting}, we introduce the generalized LQ problem and the optimal steady state problem, define the exponential (integral) turnpike property in infinite dimension, and state our main result along with some existing results on Riccati equations and perturbations of generators. In Section~\ref{infisetting} we first develop essential lemmas on the exponential convergence of solutions to the differential Riccati equations to its algebraic counterpart and the closed range test. Then we prove our main result based on several estimates, which enable us to treat the uncontrolled evolution system like an exponentially stable semigroup, and on the explicit solution of the optimal control. In Section~\ref{appli} we conclude with the application of our main result to two examples. We start by setting some notations.

{\bf Notations}
Throughout the paper, we use $\mathcal{H},\,\mathcal{U}$ and $\mathcal{Y}$ to denote the state, input and output space, respectively, and assume they are all complex Hilbert spaces. We use the same notation $\<\cdot,\cdot\> $ and $\|\cdot\|$ for the inner product and induced norm in all spaces $\calH$,  $\calU$ and $\calY$. We denote by $L(X,Y)$ the space of linear bounded operators between the Hilbert spaces $X$ and $Y$. Assume $A:D(A)\to \calH$ is the generator of a strongly continuous semigroup $\calT = \left(\calT_t\right)_{t\ge 0}$ in $\calH$ and $B\in L(\calU, \calH). $ We assume $C\in L(\calH,\calY),\,K\in L(\calU,\calH),\,z\in \mathcal{H}$ and $v\in \mathcal{U}$. Here \(K\) is further assumed to be coercive, i.e., there exists $M>0$ such that $\<K^*Ku,u\>\geq M\|u\|^2$ for all $u\in \calU$. {We use $S^*$ to denote the adjoint operator associated to operator $S$.}

{\bf Functional framework} 
We denote by $\calH_{1},\,\calH_{1}^d$ the Hilbert spaces defined on the sets $D(A)$ and $D(A^*)$, respectively, endowed with the inner products
\begin{align*}
&\<x,y\>_{\calH_{1}}:=\<(sI-A)x,(sI-A)y\>,\qquad\forall x,y\in D(A)\\
&\<x,y\>_{\calH_{1}^d}:=\<(sI-A^*)x,(sI-A^*)y\>,\qquad\forall x,y\in D(A^*)
\end{align*}
for some $s>w_0(\calT)$, where $w_0(\calT)$ is the growth bound of $\calT$. The induced norms of the two inner products are known to be equivalent to the graph norm of $A$, $A^*$. See, e.g.,~\cite[Section~2.10]{TusWei}.

We denote by $\calH_{-1},\,\calH_{-1}^d$ the Hilbert spaces being the space completions of $\calH$ with respect to the inner products
\begin{align*}
&\<x,y\>_{\calH_{-1}}:=\<(sI-A)^{-1}x,(sI-A)^{-1}y\>,\qquad\forall x,y\in \calH\\
&\<x,y\>_{\calH_{-1}^d}:=\<(sI-A^*)^{-1}x,(sI-A^*)^{-1}y\>,\qquad\forall x,y\in \calH.
\end{align*}

It is well-known that $\calH_{-1},\,\calH_{-1}^d$ are isomorphic to the dual spaces of $\calH_{1}^d,\,\calH_{1}$. For the sake of simplicity, we will identify $\calH_{-1},\,\calH_{-1}^d$ with the dual spaces of $\calH_{1}^d,\,\calH_{1}$ by not explicitly writing out the isomorphism between these spaces. So, we may define
\begin{align*}
\<x,\phi\>_{\calH_{1},\calH_{-1}^d}&:=\phi(x),\qquad\forall x\in\calH_{1},\, \phi\in \calH_{-1}^d\\
\<x,\phi\>_{\calH_{1}^d,\calH_{-1}}&:=\phi(x),\qquad\forall x\in\calH_{1}^d,\, \phi\in \calH_{-1}\\
\<\phi,x\>_{\calH_{-1}^d,\calH_{1}}&:=\ol{\<x,\phi\>_{\calH_{1},\calH_{-1}^d}},\qquad\forall x\in\calH_{1},\, \phi\in \calH_{-1}^d\\
\<\phi,x\>_{\calH_{-1},\calH_{1}^d}&:=\ol{\<x,\phi\>_{\calH_{1}^d,\calH_{-1}}},\qquad\forall x\in\calH_{1}^d,\, \phi\in \calH_{-1}.
\end{align*}
{Notice that the above notations are linear in the first argument and antilinear in the second argument, similarly to the inner product on a Hilbert space.} Moreover, if $\phi\in \calH$, we have
\begin{align*}
\<x,\phi\>_{\calH_{1},\calH_{-1}^d}=\<x,\phi\>,\,\forall x\in\calH_{1},\\
\<x,\phi\>_{\calH_{1}^d,\calH_{-1}}=\<x,\phi\>,\,\forall x\in\calH_{1}.
\end{align*}
We refer to \cite[Sections 2.9 and 2.10]{TusWei} for more details on the embeddings $\calH_{1}\subset \calH \subset \calH_{-1}^d$.

\section{Problem setting and results}\label{Probsetting}
\subsection{Mathematical setting}
{\bf The OCP.} Given a time horizon $T > 0$, we consider the infinite dimensional generalized linear-quadratic Optimal Control Problem {(OCP)$_T$}
\begin{equation}\label{OCP}
\min_{u\in L^2([0,T],\mathcal{U})}\int_0^T\ell(x(t),u(t))\,dt\quad \mathrm{s.t.}\quad \dot x=Ax+Bu,\quad x(0) = x_0\in \calH\, ,
\end{equation}
with running cost function $\ell : \mathcal{H}\times \mathcal{U}\to \R$ defined by
\begin{equation}\label{cost}
\ell(x,u) := \|Cx\|^2 + \|Ku\|^2 + 2\Re\< z,x\> + 2\Re\< v,u\>.
\end{equation}
The solution of system
\begin{equation}\label{sy}
\addtolength{\jot}{5pt}
\left\{
\begin{alignedat}{2}
&\frac{dx}{dt}=Ax+Bu\,\,\,\,\text{in}\,\,[0,T],\\
&x(0)=x_0.
\end{alignedat}
\right.
\end{equation}
is considered in the mild sense, that is,
$$
x(t)=\calT_tx_0+\int_0^t\calT_{t-s}Bu(s)ds, \qquad \forall t\ge 0\;.
$$
{We denote by $x^*_T(\cdot,x_0)$ and $u^*_T(\cdot,x_0)$, or simply by $x^*$ and $u^*$ when $x_0$ and $T$ are clear from the context, the optimal trajectory and optimal control of \eqref{OCP} corresponding to the initial condition $x_0\in \calH$ and time horizon $T>0$.} Notice that, due to the strict convexity of $\ell$ with respect to~$u$ and the linearity of \eqref{sy}, the optimal control $u^*$ of \eqref{OCP} exists and is unique. {See, e.g., \cite[Theorem 1.1]{Lions}.}
Now, we are well-prepared to define the {integral and exponential integral} turnpike property at a steady state. We recall that a controlled equilibrium of system~\eqref{sy} is a pair $(x_e,u_e)\in\mathcal{H}\times\mathcal{U}$ such that $Ax_e+Bu_e=0$.
\begin{defn}\label{turnpikedef}
We say that the optimal control problem \eqref{OCP} has the integral turnpike property at some controlled equilibrium $(x_e,u_e)$ if, for any bounded neighborhood $\calN$ of $x_e$ and $\eps>0$, {the optimal trajectory $x^*_T(\cdot,x_0)$ and optimal control $u^*_T(\cdot,x_0)$ of \eqref{OCP} with initial condition $x_0\in\calN$ satisfy
\begin{align*}
\frac{1}{(b-a)T}\int_{aT}^{bT}x^*_T(t,x_0)dt\to x_e\;,\quad\frac{1}{(b-a)T}\int_{aT}^{bT}u^*_T(t,x_0)dt\to u_e
\end{align*}
as $T\to \infty$ for every $a,b\in[0,1]$, $a\neq b$.} Additionally, if there exist positive constants $M, w>0$ independent of $T>0$ and $x_0\in \calN$ such that
$$
\|x^*_T(t,x_0)-x_e\|+\|u^*_T(t,x_0)-u_e\|\leq M(e^{-wt}+e^{-w(T-t)}),
$$
we say that the optimal control problem~\eqref{OCP} shows the exponential (integral) turnpike property at $(x_e,u_e)$.
\end{defn}
\begin{rem}
Since $B$ is a bounded operator, if $(x_e,u_e)$ is a controlled equilibrium, we have that $Ax_e=-Bu_e\in\calH$. This implies that $x_e$ must be an element of $D(A)$.
\end{rem}

To study the turnpike property, it is useful to analyse the optimal steady state corresponding to the running cost $\ell$. The optimal steady state problem is defined as:
\begin{align}\label{OSSP}
\min_{x \in D(A),\,u\in\calU} \ell(x,u)\qquad\text{s.t. }Ax+Bu = 0.
\end{align}

We will also make use of several structural-theoretical properties of the control system under consideration, which we introduce in the following.

We say that the pair $(A,C)$ is exponentially detectable if there exists some $L\in L(\calY,\calH)$ such that $A+LC$ generates an exponentially stable semigroup. Similarly, $(A,B)$ is called exponentially stabilizable if there exists $F\in L(\calH,\calU)$ such that $A+BF$ is exponentially stable. 

The operator $[A\;B]:D(A)\times \calU\to \calH$ is defined by
$$
[A\;B]\bvec{x}{u}:=Ax+Bu.
$$
Observe that, since $A$ and $B$ are both closed operators, $\ker[A\;B]$ is closed in $\calH\times\calU$.

We denote by $\bvec{A^*}{B^*}$ the adjoint operator of $[A\;B]$. It is easy to check that the domain of $\bvec{A^*}{B^*}$ is $D(A^*)$ and that we have
\begin{align*}
\bvec{A^*}{B^*}w=\bvec{A^*w}{B^*w}{\in \calH\times\calU}\;,\quad \forall w\in D(A^*).
\end{align*}

\begin{rem}\label{exanduniofossp}
{With reference to} \cite[Lemma~10 and Theorem~17]{LFM}, if we assume $(A,C)$ is exponentially detectable and \(K\) is coercive, then the optimal steady state problem~\eqref{OSSP} has a unique minimizer $(x_e,u_e)$, which is characterized by
\begin{equation}\label{oss}
\mathbb P_{\ker[A\;B]}\bvec{z+C^*Cx_e}{v+K^*Ku_e} = 0,
\end{equation}
where $\mathbb P_{\ker[A\;B]}$ is the projection operator from $\calH\times \calU$ to its closed subspace $\ker[A\;B]$.
\end{rem}

In the following, we will assume that the optimal steady state is at $(x_e,u_e) = (0,0)$, thanks to the next result.
\begin{lem}\label{changestate}
Assume that $(A,C)$ is exponentially detectable. Then the solution of the optimal steady state problem \eqref{OSSP} is at $(x_e,u_e)$ if and only if {the solution of the modified optimal steady state problem
\begin{align}\label{tlstate}
\min_{x \in D(A),\,u\in\calU} \ell(x,u)\qquad\text{s.t. }Ax+Bu = 0,
\end{align}
is at $(0,0)$, where $\wt{\ell}$ is defined by
\begin{align}\label{tlcost}
\wt{\ell}(x,u) := &\|Cx\|^2 + \|Ku\|^2 + 2\Re\langle z+C^*Cx_e,x\rangle + 2\Re\langle v+K^*Ku_e,u\rangle\; .
\end{align}
In this case, the OCP \eqref{OCP}-\eqref{cost} satisfies the turnpike property at $(x_e,u_e)$ if and only if the modified OCP
\begin{align}\label{MOCP}
\min_{u\in L^2([0,T],\mathcal{U})}\int_0^T\wt{\ell}(x(t),u(t))\,dt\,\quad \mathrm{s.t. }\ \dot{x}=Ax+Bu,\quad x(0) = x_0\in\calH
\end{align}
satisfies the turnpike property at $(0,0)$.}
\end{lem}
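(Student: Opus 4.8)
The plan is to reduce both assertions to the single affine change of variables $\xi = x - x_e$, $\eta = u - u_e$, under which the original and modified problems coincide up to an additive constant. First I would record the algebraic identity
\[
\ell(x,u) = \wt\ell(x-x_e,\,u-u_e) + \ell(x_e,u_e),\qquad \forall (x,u)\in\calH\times\calU,
\]
obtained by expanding $\|C(\xi+x_e)\|^2$, $\|K(\eta+u_e)\|^2$, $2\Re\<z,\xi+x_e\>$ and $2\Re\<v,\eta+u_e\>$, absorbing the cross terms $2\Re\<\xi,C^*Cx_e\>$ and $2\Re\<\eta,K^*Ku_e\>$ into the linear part of $\wt\ell$ (using that $\Re\<a,b\>=\Re\<b,a\>$ under the adopted inner-product convention), and observing that the terms free of $(\xi,\eta)$ assemble exactly into the constant $\ell(x_e,u_e)$. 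Since $B$ is bounded, the equilibrium pair satisfies $x_e\in D(A)$ and $Ax_e+Bu_e=0$, so the shift preserves both the static constraint, $Ax+Bu=0\iff A\xi+B\eta=0$, and the dynamics, $\dot x = Ax+Bu\iff\dot\xi = A\xi+B\eta$.

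For the steady-state equivalence, I would combine the identity with the preservation of the constraint to see that $(x,u)\mapsto(x-x_e,u-u_e)$ is a bijection from the feasible set of \eqref{OSSP} onto that of \eqref{tlstate} which changes the cost only by the constant $\ell(x_e,u_e)$; hence it carries the (unique, by Remark~\ref{exanduniofossp}) minimizer of \eqref{OSSP} to the minimizer of \eqref{tlstate}. Therefore the minimizer of \eqref{OSSP} equals $(x_e,u_e)$ precisely when the minimizer of \eqref{tlstate} is $(0,0)$. Exponential detectability enters here only to guarantee, via Remark~\ref{exanduniofossp}, that both problems have a unique solution, so that the statement "the solution is at~$\cdot$" is meaningful. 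Alternatively one can argue directly from the characterization \eqref{oss}: writing the analogous optimality condition for $\wt\ell$ and evaluating it at $(0,0)$ reduces it exactly to \eqref{oss}, so $(0,0)$ minimizes \eqref{tlstate} if and only if $(x_e,u_e)$ minimizes \eqref{OSSP}.

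For the turnpike equivalence, I would apply the same substitution at the level of trajectories. If $x$ is the mild solution driven by $u$ from $x_0$, then $\xi=x-x_e$, $\eta=u-u_e$ solve $\dot\xi=A\xi+B\eta$ with $\xi(0)=x_0-x_e$, and
\[
\int_0^T\ell(x(t),u(t))\,dt = \int_0^T\wt\ell(\xi(t),\eta(t))\,dt + T\,\ell(x_e,u_e);
\]
the additive constant is irrelevant to minimization, so $(x^*_T,u^*_T)$ solves \eqref{OCP} from $x_0$ if and only if $(x^*_T-x_e,\,u^*_T-u_e)$ solves \eqref{MOCP} from $x_0-x_e$. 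Both the averaged convergences and the exponential estimate of Definition~\ref{turnpikedef} then translate verbatim, since $x^*_T(t,x_0)-x_e$ and $u^*_T(t,x_0)-u_e$ are exactly the optimal trajectory and control of \eqref{MOCP}, the constants $M,w$ are unchanged, and $\calN$ ranges over all bounded neighborhoods of $x_e$ exactly when $\calN-x_e$ ranges over all bounded neighborhoods of $0$.

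The argument is essentially bookkeeping, so no single step is a genuine obstacle; the points requiring care are verifying that the cross terms regroup precisely into $\wt\ell$ under the stated linear/antilinear convention, confirming that the collected remainder is independent of $(x,u)$, and ensuring the minimizers are well defined so that the two-sided steady-state equivalence is not vacuous — this last being where exponential detectability is invoked through Remark~\ref{exanduniofossp}.
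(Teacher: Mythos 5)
Your proposal is correct and follows essentially the same route as the paper: the affine shift $(\xi,\eta)=(x-x_e,u-u_e)$, the observation that $\ell(\xi+x_e,\eta+u_e)$ differs from $\wt\ell(\xi,\eta)$ only by the constant $\ell(x_e,u_e)$, the preservation of both the static constraint and the dynamics, and the resulting correspondence of minimizers and of optimal pairs. The only difference is that you spell out the algebraic identity and the neighborhood correspondence explicitly, where the paper leaves them as remarks; no substantive divergence.
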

\begin{proof}
For any $x\in\calH$ and $u\in\calU$, set $\wt{x}:=x-x_e$ and $\wt{u}:=u-u_e$. 
If $(x_e,u_e)$ is the minimizer of the optimal steady state problem \eqref{OSSP}, we easily see that $(0,0)$ is the minimizer of
\begin{align}
\min_{\wt{x} \in D(A),\,\wt{u}\in\calU} \ell(\wt{x}+x_e,\wt{u}+u_e)\qquad\text{s.t. }A\wt{x}+B\wt{u} = 0
\end{align}
and vice versa. {It is not hard to see that $\ell(\wt{x}+x_e,\wt{u}+u_e)$ only differs from $\wt{\ell}(\wt{x},\wt{u})$ by a constant term. So, $(0,0)$ is the minimizer of problem \eqref{tlstate}.}

Now, let $(x^*,u^*)$ be the optimal pair for OCP \eqref{OCP}. Observe that
$$
\frac{d\wt{x}}{dt}=\frac{d(x-x_e)}{dt}=Ax+Bu=A\wt{x}+B\wt{u}.
$$
{So, $(\wt{x}^*,\wt{u}^*):=(x^*-x_e,u^*-u_e)$ is the optimal pair of problem
\begin{align*}
\min_{\wt{u}\in L^2([0,T],\mathcal{U})}\int_0^T\ell(\wt{x}(t)+x_e,\wt{u}(t)+u_e)\,dt\,\,\,\mathrm{s.t.}\, \dot{\wt{x}}=A\wt{x}+B\wt{u},\quad \wt{x}(0) = \wt{x}_0:=x_0-x_e.
\end{align*}
As mentioned before, $\ell(\wt{x}+x_e,\wt{u}+u_e)$ only differs from $\wt{\ell}(\wt{x},\wt{u})$ by a constant term. So, we have that $(x^*-x_e,u^*-u_e)$ is the optimal pair for modified OCP \eqref{MOCP}} and vice versa. Therefore, we deduce that OCP \eqref{OCP} satisfies turnpike property at $(x_e,u_e)$ if and only if the modified OCP \eqref{OCP}-\eqref{tlcost} satisfies turnpike property at $(0,0)$.
\end{proof}

\subsection{Riccati equations and perturbation of generators}
In this section, we introduce some well-known properties of differential and algebraic Riccati equations. {For more details on this section, we refer to \cite[Part IV, Section 1]{Bensou} and \cite[Part V, Section 1]{Bensou} and the references therein.}

Let $\Sigma(\calH)$ denote the Hilbert space of self-adjoint operators endowed with their operator norm. Let $C_s([0,T],\Sigma(\calH))$ (resp. $C_s^1([0,T],\Sigma(\calH))$) denote the linear vector space (endowed with the strong operator topology) of all $P:[0,T]\to \Sigma(\calH)$ such that $P(\cdot)x$ is a continuous (resp.~continuously differentiable) function on $[0,T]$, for every $x\in \calH$.

The differential Riccati equation is given by
\begin{equation}\label{Riccati}
\addtolength{\jot}{5pt}
\left\{
\begin{alignedat}{2}
&\frac{dP}{dt}-A^*P-PA+PB(K^*K)^{-1}B^*P-C^*C=0\\
&P(0)=P_0\in \Sigma(\calH).
\end{alignedat}
\right.
\end{equation}
\begin{defn}
We call $P\in C_s([0,T],\Sigma(\calH))$ a (weak) solution of the differential Riccati equation~\eqref{Riccati} if $P(0)=P_0$ and, for any $x,y\in D(A)$, $\<P(t)x,y\>$ is differentiable on $[0,T]$ and verifies, for each $t\in[0,T]$,
\begin{equation}\label{DRE}
\frac{d\<P(t)x,y\>}{dt}-\<P(t)x,Ay\>-\<P(t)Ax,y\>+\<(K^*K)^{-1}B^*P(t)x,B^*P(t)y\>-\<Cx,Cy\>=0.
\end{equation}
\end{defn}

If our assumptions about $A,B,C,K$ are verified and $P_0$ is non-negative, then there exists a unique non-negative solution to equation \eqref{Riccati}. See, e.g., \cite[Part IV, Theorem~2.1]{Bensou}. It's well-known that the optimal control for OCP \eqref{OCP} with cost function $\ell(x,u) := \|Cx\|^2 + \|Ku\|^2$ is given by the feedback law 
$$
u^*(t)=-(K^*K)^{-1}B^*P(T-t)x^*(t)\; ,
$$ 
where $P$ is the solution of \eqref{Riccati} with $P(0)=0$. 
Sometimes we want $P$ to have stronger regularity by imposing some restrictions on $P_0$. To define the strict solutions of \eqref{Riccati}, for any $F\in \Sigma(\calH)$, we introduce the bilinear mapping
$$
\vphi_F(x,y):=\<Fx,Ay\>+\<Ax,Fy\>,\quad \forall x,y\in D(A).
$$
Let
$
D(\calA):=\{F\in \Sigma(\calH)\,|\,\vphi_F\,\text{has a continuous extension to}\,\calH\times\calH\}.
$
For any $F\in D(\calA)$, since $\vphi_F$ has a continuous extension in $\calH\times\calH$, there exists a unique bounded linear operator $\calA(F):\calH\to\calH$ satisfying 
$$
\<\calA(F)x,y\>=\<Fx,Ay\>+\<Ax,Fy\>,\quad \forall x,y\in D(A),\,F\in D(\calA).
$$
\begin{defn}
We say that $P\in C_s([0,T],\Sigma(\calH))$ is a strict solution of the differential Riccati equation \eqref{Riccati} if $P$ satisfies
\begin{enumerate}
\item[(a)] $P\in C_s^1([0,T],\Sigma(\calH)),$
\item[(b)] $P(t)\in D(\calA)$ for all $t\geq 0,$
\item[(c)] $\calA(P)\in C_s([0,T];\Sigma(\calH))$ and $P{'}=\calA(P)-PB(K^*K)^{-1}B^*P+C^*C,\, P(0)=P_0.$
\end{enumerate}
\end{defn}

\begin{rem}\label{whystrictsolution}
If $P$ is a strict solution of \eqref{Riccati}, then $P$ is also a mild solution. {If our assumptions about $A,B,C,K$ are verified, $P_0$ is non-negative and $P_0\in D(\calA)$}, then there exists a unique strict solution to the differential Riccati equation \eqref{Riccati}. See, e.g., \cite[Part~IV, Theorem 3.2]{Bensou}. Notice, in particular, that $0\in D(\calA)$. 
\end{rem}

We now introduce the algebraic Riccati equation, which is given by
\begin{equation}\label{ARiccati}
A^*P+PA-PB(K^*K)^{-1}B^*P+C^*C=0.
\end{equation}
\begin{defn}
We call $P\in \Sigma(\calH)$ a solution of the algebraic Riccati equation \eqref{ARiccati} if for any $x,y\in D(A)$, we have
$$
\<Px,Ay\>+\<PAx,y\>-\<(K^*K)^{-1}B^*Px,B^*Py\>+\<Cx,Cy\>=0.
$$
\end{defn}
{If our assumptions about $A,B,C,K$ are verified and the pair $(A,B)$ is exponentially stabilizable}, then there exists a minimal non-negative solution to the algebraic Riccati equation. See, e.g., \cite[Part V, Proposition 2.3]{Bensou}. We denote such minimal solution by $P_{\min}$, which is also the unique non-negative solution of \eqref{ARiccati} if we additionally assume that $(A,C)$ is exponentially detectable. It is well known that if $(A,B)$ is exponentially stabilizable, then the optimal control problem on infinite time horizon
\begin{equation}\label{infOCP}
\min_{u\in L^2([0,\infty),\mathcal{U})}\int_0^\infty \|Cx(t)\|^2 + \|Ku(t)\|^2\,dt\,\,\,\mathrm{s.t.}\, \dot x=Ax+Bu,\quad x(0) = x_0
\end{equation}
is well-posed, and the optimal control $u^*$ is given by the feedback law
$$
u^*(t)=-(K^*K)^{-1}B^*P_{\min}x^*(t).
$$
Moreover, the semigroup generated by $A-BB^*P_{\min}$ is exponentially stable.

We conclude the section by recalling two useful results from~\cite{Bensou}. The first one concerns the convergence of $P$ to $P_{\min}$. In the next section we will generalize the exponential convergence 
in the infinite dimensional setting.
\begin{prop}\label{findimP}
Suppose $P$ is the solution of \eqref{Riccati} with $P(0)=0$ and $P_{\min}$ is the minimal solution of \eqref{ARiccati}, then we have $P(t)\sra P_{\min}$ in strong operator topology. If $\mathcal{H},\,\mathcal{U}$ and $\mathcal{Y}$ are all Euclidean spaces, then $P(t)\sra P_{\min}$ in norm topology and the convergence rate is exponential, i.e., there exists positive constants $M,\eta>0$ such that 
$$\|P(t)-P_{\min}\|\leq Me^{-\eta t}.$$
\end{prop}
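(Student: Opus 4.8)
The plan is to read the Riccati flow through the optimal-control interpretation already recorded in this section, and then to linearize the error around $P_{\min}$. Throughout write $R:=(K^*K)^{-1}$ and $S:=BRB^*\in L(\calH)$, a nonnegative self-adjoint operator. Recall that the solution $P$ of \eqref{Riccati} with $P(0)=0$ represents the value of the finite-horizon version of \eqref{infOCP}, namely $\langle P(T)x_0,x_0\rangle=\min_u\int_0^T(\|Cx\|^2+\|Ku\|^2)\,dt$. Since the running cost is nonnegative, truncating an optimal control for horizon $T'>T$ to $[0,T]$ shows that this value is nondecreasing in $T$, so $T\mapsto P(T)$ is monotone nondecreasing in the order of $\Sigma(\calH)$. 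Exponential stabilizability of $(A,B)$ provides a feedback $F$ with $A+BF$ exponentially stable, and inserting the admissible control $u=Fx$ bounds $\langle P(T)x_0,x_0\rangle\le\langle Qx_0,x_0\rangle$ uniformly in $T$, where $Q\ge0$ is the bounded cost operator of that feedback. Thus $\{P(T)\}_{T\ge0}$ is a monotone, uniformly bounded family of nonnegative self-adjoint operators, and the monotone convergence theorem for self-adjoint operators yields a strong limit $P_\infty\in\Sigma(\calH)$.

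To identify $P_\infty$, I would first pass to the limit in the weak form \eqref{DRE} tested on $x,y\in D(A)$; the quadratic term converges because $B^*P(t)x\sra B^*P_\infty x$, so $P_\infty$ solves the algebraic Riccati equation \eqref{ARiccati}. That $P_\infty=P_{\min}$ then follows from minimality: for an arbitrary nonnegative solution $\bar P$ of \eqref{ARiccati}, the completion-of-squares identity
$$\int_0^T(\|Cx\|^2+\|Ku\|^2)\,dt=\langle\bar Px_0,x_0\rangle-\langle\bar Px(T),x(T)\rangle+\int_0^T\|K(u+RB^*\bar Px)\|^2\,dt$$
holds along any trajectory; evaluating it along the feedback $u=-RB^*\bar Px$ and discarding the nonpositive boundary term gives $\langle P(T)x_0,x_0\rangle\le\langle\bar Px_0,x_0\rangle$, and letting $T\to\infty$ shows $P_\infty\le\bar P$. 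Hence $P_\infty=P_{\min}$ and $P(t)\sra P_{\min}$ strongly, proving the first assertion.

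For the Euclidean statement, strong convergence is norm convergence, so only the rate remains. Setting $\Delta(t):=P(t)-P_{\min}$ and $A_{\min}:=A-SP_{\min}$, and grouping the quadratic terms as $PSP-P_{\min}SP_{\min}=\Delta SP+P_{\min}S\Delta$, subtracting \eqref{ARiccati} from \eqref{Riccati} produces the error equation
$$\dot\Delta=A_{\min}^*\Delta+\Delta A_{\min}-\Delta S\Delta,\qquad\Delta(0)=-P_{\min},$$
where $A_{\min}$ generates an exponentially stable semigroup, say $\|e^{tA_{\min}}\|\le Me^{-\omega t}$. Variation of constants rewrites this as
$$\Delta(t)=e^{tA_{\min}^*}\Delta(0)e^{tA_{\min}}-\int_0^te^{(t-s)A_{\min}^*}\Delta(s)S\Delta(s)e^{(t-s)A_{\min}}\,ds,$$
whose linear part decays like $e^{-2\omega t}$. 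Since $\Delta(t)\to0$, I would fix $t_0$ with $\|\Delta(t)\|\le\eps$ for $t\ge t_0$ and $\eps$ small enough that $2\omega-M^2\|S\|\eps>0$; bounding $\|\Delta(s)S\Delta(s)\|\le\|S\|\eps\|\Delta(s)\|$ and applying Gronwall's inequality to $e^{2\omega(t-t_0)}\|\Delta(t)\|$ yields $\|\Delta(t)\|\le M^2\|\Delta(t_0)\|e^{-(2\omega-M^2\|S\|\eps)(t-t_0)}$ on $[t_0,\infty)$, and enlarging the constant to cover $[0,t_0]$ gives the claimed bound with $\eta=2\omega-M^2\|S\|\eps$.

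The main obstacle is the identification step rather than the estimates. In infinite dimensions the monotone limit exists only strongly, so passing to the limit in the Riccati equation must be carried out in the bilinear form on $D(A)\times D(A)$, and it is the minimality argument, not mere convergence, that pins down $P_\infty=P_{\min}$ among the possibly many nonnegative solutions of \eqref{ARiccati}. For the rate, the delicate point is that exponential decay of the linear flow $e^{tA_{\min}}$ is intrinsically a norm statement and the Gronwall closure needs $\Delta$ bounded; both hold automatically in the Euclidean setting but not in general, which is exactly why the quantitative conclusion is restricted there and why the next section must develop new tools to recover a norm/exponential estimate in infinite dimensions.
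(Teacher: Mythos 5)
Your first part (strong convergence) is correct, and it is a genuinely self-contained proof of something the paper simply outsources: the paper's ``proof'' of this proposition consists of citations (Bensoussan et al., Part~V, Proposition~4.2 for the strong limit, and a local stability result for the finite-dimensional rate). Your chain --- monotonicity of $T\mapsto P(T)$ from the optimal-control interpretation, the uniform bound via a stabilizing feedback, strong convergence of a bounded monotone family of self-adjoint operators, identification of the limit as a solution of \eqref{ARiccati} by passing to the limit in \eqref{DRE} (implicitly using that a function whose value and derivative both converge must have derivative tending to $0$), and the completion-of-squares comparison with an arbitrary nonnegative solution --- is exactly the standard argument behind that citation and is sound.

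The quantitative part, however, has a gap. Your rate argument hinges on the claim that $A_{\min}=A-BRB^*P_{\min}$ generates an exponentially stable semigroup. That is not a consequence of the proposition's hypotheses: the only structural assumption implicit in the statement is exponential stabilizability of $(A,B)$ (needed for $P_{\min}$ to exist), and stabilizability alone does not make $A_{\min}$ Hurwitz. Scalar counterexample: $A=1$, $B=1$, $C=0$, $K=1$; the nonnegative solutions of \eqref{ARiccati} are $0$ and $2$, so $P_{\min}=0$ and $A_{\min}=1$ is unstable. (Here $P(t)\equiv 0=P_{\min}$, so the conclusion holds trivially, but your mechanism --- the bound $\|e^{tA_{\min}}\|\le Me^{-\omega t}$ that feeds the Gronwall closure --- fails.) Stability of $A-BRB^*P_{\min}$ is a consequence of detectability of $(A,C)$, which the paper assumes in its main theorem and asserts, somewhat loosely, just before the proposition, but which is not among the proposition's hypotheses. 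Two ways to close the gap: either state the rate under the additional detectability assumption (this covers every use of the proposition in the paper), or reduce to the observable part: the unobservable subspace $N$ is $A$-invariant and contained in $\ker C$, both $P(t)$ and $P_{\min}$ vanish on $N$ (hence, being nonnegative, are block-diagonal with respect to $N^\perp\oplus N$), and the reduced pair on $N^\perp$ is observable and stabilizable, so the closed-loop matrix there is Hurwitz and your linearization-plus-Gronwall argument applies verbatim. The Gronwall step itself is fine, provided you restart the variation-of-constants formula at $t_0$ so that the smallness $\|\Delta(s)\|\le\eps$ is valid on the whole integration range.
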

We refer to \cite[Part V, Proposition 4.2]{Bensou} for a proof of the convergence in strong operator topology. On the other hand, the exponential convergence rate in finite dimensions is a well-known result. In fact, since in finite dimensional spaces the convergence in strong topology is equivalent to the convergence in operator topology, we have that $P(t)\to P_{\min}$ in matrix norm. The exponential rate is then a consequence of the local stability result~\cite[Part V, Section 1, Proposition 4.1]{Bensou}. 
Finally, we recall from \cite[Proposition 3.5]{Bensou} a perturbation result that will be useful in the proof of our main theorem.

\begin{prop}\label{evolution}
Assume that $F\in C_s([0,T],L(\calH))$ and $f\in L^2((0,T),\calH)$. Then the problem
\begin{equation}\label{evopro}
\begin{split}
\addtolength{\jot}{5pt}
\left\{
\begin{alignedat}{2}
&\frac{dx(t)}{dt}=Ax(t)+F(t)x(t)+f(t),\,\,\,\,t\in[0,T],\\
&x(0)=x_0\in \calH
\end{alignedat}
\right.
\end{split}
\end{equation}
admits a unique solution in $L^2((0,T),\calH)$, that is, $x\in H^1((0,T),\calH_{-1})\cap C([0,T],\calH)$ and 
$$
\frac{dx(t)}{dt}=Ax(t)+Fx(t)+f(t) \,\,\,\text{in}\,\,\,\,\calH_{-1},\,\,\,\,\,\forall t\in[0,T].
$$

If, in addition, $F\in C_s^1([0,T],L(\calH))$, $x_0\in D(A)$ and $f\in H^1((0,T),\calH)$, then the solution of problem \eqref{evopro} is a strict one, i.e., $x\in C^1([0,T],\calH)\cap C([0,T],\calH_1)$.
\end{prop}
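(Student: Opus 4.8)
The plan is to treat \eqref{evopro} as a bounded, time-dependent perturbation of the $C_0$-semigroup $\calT$, solve it via the associated variation-of-constants (mild) formulation, and then bootstrap the regularity. First I would establish existence and uniqueness in $C([0,T],\calH)$. Since each $F(t)\in L(\calH)$ and $F$ is strongly continuous on the compact interval $[0,T]$, the Banach--Steinhaus theorem yields a uniform bound $K_F:=\sup_{t\in[0,T]}\|F(t)\|<\infty$; together with $M_T:=\sup_{t\in[0,T]}\|\calT_t\|<\infty$ this lets me define the integral operator
$$(\Phi x)(t):=\calT_t x_0+\int_0^t\calT_{t-s}\bigl(F(s)x(s)+f(s)\bigr)\,ds$$
on $C([0,T],\calH)$. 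The map $s\mapsto F(s)x(s)$ is continuous (write $F(s)x(s)-F(s_0)x(s_0)=F(s)(x(s)-x(s_0))+(F(s)-F(s_0))x(s_0)$ and use the uniform bound together with strong continuity), so $F(\cdot)x(\cdot)+f\in L^2((0,T),\calH)$ and the convolution term is continuous; hence $\Phi$ maps $C([0,T],\calH)$ into itself. A standard estimate gives $\|(\Phi x_1-\Phi x_2)(t)\|\le M_TK_F\int_0^t\|x_1(s)-x_2(s)\|\,ds$, so some iterate $\Phi^n$ is a contraction and $\Phi$ has a unique fixed point $x\in C([0,T],\calH)$, the unique mild solution.

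Next I would promote this mild solution to a solution in $\calH_{-1}$. Using the extrapolation framework recalled in the functional-setting section (see \cite[Sections 2.9--2.10]{TusWei}), the generator extends to a bounded operator $A_{-1}\in L(\calH,\calH_{-1})$ and $\calT$ to a semigroup on $\calH_{-1}$, and the mild solution then satisfies
$$\frac{dx}{dt}=A_{-1}x+F x+f\qquad\text{for a.e. }t\in[0,T]$$
as the weak derivative in $H^1((0,T),\calH_{-1})$. Since $x\in C([0,T],\calH)$ we have $A_{-1}x\in C([0,T],\calH_{-1})$, while $Fx+f\in L^2((0,T),\calH)\subset L^2((0,T),\calH_{-1})$; hence $\dot x\in L^2((0,T),\calH_{-1})$ and $x\in H^1((0,T),\calH_{-1})$, which proves the first assertion.

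For the strict-solution statement I would use a bootstrap that sidesteps the circular dependence of the forcing $g=Fx+f$ on $x$. Under the stronger hypotheses, formally differentiating the equation suggests that $y:=\dot x$ should solve the same type of problem,
$$\dot y=Ay+Fy+\bigl(\dot F x+\dot f\bigr),\qquad y(0)=Ax_0+F(0)x_0+f(0),$$
whose data are admissible: $\dot F\in C_s([0,T],L(\calH))$ and $x\in C([0,T],\calH)$ give $\dot F x\in C([0,T],\calH)$, $\dot f\in L^2$, and $x_0\in D(A)$ together with $f\in H^1\hookrightarrow C([0,T],\calH)$ make $y(0)\in\calH$. So the first part of the proposition produces a unique mild solution $y\in C([0,T],\calH)$ to this auxiliary problem. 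I would then prove that $w(t):=x_0+\int_0^t y(\tau)\,d\tau$ satisfies the \emph{original} mild integral equation, i.e. $w=\Phi w$, so that $w=x$ by the uniqueness already established; this identifies $y=\dot x$ and yields $x\in C^1([0,T],\calH)$. Finally, reading the $\calH_{-1}$-equation as $A_{-1}x=y-Fx-f$, the right-hand side lies in $C([0,T],\calH)$, so $A_{-1}x(t)\in\calH$ for every $t$; by the definition of the part of $A_{-1}$ in $\calH$ this means $x(t)\in D(A)$ with $Ax=A_{-1}x$ continuous in $\calH$, i.e. $x\in C([0,T],\calH_1)$, and the equation $\dot x=Ax+Fx+f$ holds in $\calH$.

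The main obstacle is the rigorous identification $y=\dot x$, namely verifying that $w=x_0+\int_0^t y$ satisfies $w=\Phi w$. This requires interchanging $\calT_{t-s}$ with the time integral, applying Fubini, and using the semigroup identity $\int_0^\tau\calT_\sigma v\,d\sigma\in D(A)$ with $A\int_0^\tau\calT_\sigma v\,d\sigma=(\calT_\tau-I)v$; carrying this out carefully---rather than differentiating difference quotients of $x$ directly, which would presuppose the regularity not yet available---is the delicate computational step. Everything else reduces to the contraction estimate of the first paragraph and the standard mapping properties of $A_{-1}$ on the extrapolation spaces.
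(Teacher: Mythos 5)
The paper itself does not prove this proposition: it is recalled verbatim from \cite[Proposition 3.5]{Bensou}, so there is no in-paper argument to compare yours against; what you have written is a self-contained proof of a quoted result, along the standard semigroup-perturbation lines. Your first two steps are correct: the contraction argument for the mild solution, and the passage to the extrapolation space (where $A_{-1}\in L(\calH,\calH_{-1})$ and the mild solution solves the equation almost everywhere) are both fine. One small mismatch with the statement: your fixed-point argument gives uniqueness among \emph{mild} solutions in $C([0,T],\calH)$, whereas the proposition asserts uniqueness in the class of $L^2$ solutions of the $\calH_{-1}$-equation; to match it you should add the (standard) converse that any $x\in H^1((0,T),\calH_{-1})\cap C([0,T],\calH)$ solving the equation in $\calH_{-1}$ is necessarily the mild solution, e.g.\ by variation of constants for the extrapolated semigroup.

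The genuine problem sits exactly at the step you flag as ``delicate'' and leave unverified. With your auxiliary problem, whose forcing is $\dot F(s)x(s)+\dot f(s)$, the identity $w=\Phi w$ for $w(t):=x_0+\int_0^t y(\tau)\,d\tau$ is \emph{not} something you can verify directly. Carrying out Fubini, the identity $\int_0^t\calT_\sigma Ax_0\,d\sigma=\calT_tx_0-x_0$, and the integration by parts $F(s)y(s)+\dot F(s)w(s)=\frac{d}{ds}\bigl(F(s)w(s)\bigr)$ (valid since $\dot w=y$), you arrive at
\begin{equation*}
w(t)=\calT_tx_0+\int_0^t\calT_{t-s}\bigl(F(s)w(s)+f(s)\bigr)\,ds
+\int_0^t\calT_\sigma\!\int_0^{t-\sigma}\dot F(s)\bigl(x(s)-w(s)\bigr)\,ds\,d\sigma ,
\end{equation*}
because your forcing carries $\dot F\,x$ while the exact derivative produces $\dot F\,w$. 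The remainder vanishes only when $x=w$, so ``$w=\Phi w$'' is equivalent to the conclusion you want, not a stepping stone to it. The repair is routine with tools you already have: subtract the two integral equations, bound the remainder by $M_T K_{\dot F}\,T\int_0^t\|x(s)-w(s)\|\,ds$ (where $K_{\dot F}:=\sup_{t\in[0,T]}\|\dot F(t)\|<\infty$ by Banach--Steinhaus), and conclude $x=w$ by Gr\"onwall; alternatively, pose the auxiliary problem with forcing $\dot F(s)\bigl(x_0+\int_0^s y(\tau)\,d\tau\bigr)+\dot f(s)$, still solvable by contraction, after which $w=\Phi w$ does hold exactly and uniqueness of mild solutions applies. (Incidentally, the difference-quotient route you dismissed also works without presupposing any regularity: $z_h(t)=(x(t+h)-x(t))/h$ satisfies a mild equation of the same type, and a Gr\"onwall comparison gives $z_h\to y$ uniformly.) With either fix, your identification $y=\dot x$ and the final bootstrap --- $A_{-1}x=\dot x-Fx-f\in C([0,T],\calH)$, hence $x(t)\in D(A)$ and $x\in C([0,T],\calH_1)$ --- are correct, and the proof is complete.
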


\subsection{Results}
Our main result is motivated by the following theorem in finite dimension~\cite[Theorem 8.1]{GruG21}.
\begin{thm}\label{mainfin}
Suppose $\mathcal{H}=\mathbb R^n,\,\mathcal{U}=\R^m$ and $\mathcal{Y}=\R^k$ with $m,n,k\in \mathbb N^+$. The following statements are equivalent:\\
\qquad (a) $(A,C)$ is detectable and $(A,B)$ is stabilizable.\\
\qquad (b) The OCP \eqref{OCP} has the (measure) turnpike property at some controlled equilibrium.
\end{thm}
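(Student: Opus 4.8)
The plan is to prove the two implications separately, identifying condition~(a) with the structural requirement that the associated first-order optimality system admit an exponential dichotomy compatible with its boundary data. For the implication (a)$\Rightarrow$(b), I would first invoke Remark~\ref{exanduniofossp}: detectability of $(A,C)$ together with coercivity of $K$ yields a unique minimizer $(x_e,u_e)$ of the optimal steady state problem~\eqref{OSSP}, and by Lemma~\ref{changestate} it suffices to prove the turnpike property at $(0,0)$ for the reduced problem~\eqref{MOCP}. I would then write the Pontryagin optimality system
$$
\dot x = Ax - B(K^*K)^{-1}(B^*p+v),\qquad \dot p = -A^*p - C^*Cx - z,
$$
with $x(0)=x_0$ and transversality $p(T)=0$, whose homogeneous part is governed by the Hamiltonian matrix $\mathcal M=\bmat{A}{-B(K^*K)^{-1}B^*}{-C^*C}{-A^*}$. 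The crucial point is that, under~(a), the algebraic Riccati equation~\eqref{ARiccati} possesses a stabilizing non-negative solution $P_{\min}$ (the unique non-negative one, by the results quoted after~\eqref{infOCP}), so that $\mathcal M$ is hyperbolic and its stable and unstable invariant subspaces are the graphs $p=P_{\min}x$ and $p=P_{\max}x$ of the stabilizing and anti-stabilizing solutions. Since each subspace is a graph over the state, the boundary conditions $x(0)=x_0$ and $p(T)=0$ are transversal to the unstable and stable subspaces respectively; solving the boundary value problem through this dichotomy expresses $x^*$ as a left boundary layer decaying like $e^{-wt}$ superposed with a right boundary layer decaying like $e^{-w(T-t)}$, giving the two-sided estimate $\|x^*_T(t)\|+\|u^*_T(t)\|\le M(e^{-wt}+e^{-w(T-t)})$ with $M,w>0$ independent of $T$ and of $x_0$ in a bounded set. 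This exponential integral turnpike in particular implies the measure turnpike at $(x_e,u_e)$. Equivalently, one may reach the same conclusion using Proposition~\ref{findimP}: the exponential convergence $P(t)\to P_{\min}$ makes the feedback $-(K^*K)^{-1}B^*P(T-t)$ essentially stabilizing on the bulk of $[0,T]$, while a time-reversed Riccati equation controls the right boundary layer.

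For the converse (b)$\Rightarrow$(a) I would argue by contraposition, producing initial data for which the turnpike estimate fails on a time set of measure tending to infinity with $T$. If $(A,B)$ is not stabilizable, the Hautus test gives a left eigenvector $w\neq0$ with $w^*A=\lambda w^*$, $w^*B=0$, $\Re\lambda\ge0$; then $\tfrac{d}{dt}(w^*x)=\lambda\, w^*x$ is independent of the control, so choosing $x_0$ in the prescribed neighborhood with $w^*x_0\neq w^*x_e$ forces $w^*x^*_T(t)=e^{\lambda t}w^*x_0$ to stay bounded away from $w^*x_e$ on a set whose measure grows with $T$, contradicting the measure turnpike at every candidate equilibrium. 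If instead $(A,C)$ is not detectable, there is $\xi\neq0$ with $A\xi=\lambda\xi$, $C\xi=0$, $\Re\lambda\ge0$; then $(\xi,0)$ is an eigenvector of $\mathcal M$ with eigenvalue $\lambda$ and \emph{zero costate component}, equivalently $P_{\min}\xi=0$ and the optimal feedback never expends control energy on this unpenalized mode. Consequently the terminal condition $p(T)=0$ excites the non-decaying mode $e^{\lambda t}\xi$, and for suitable $x_0$ the optimal trajectory retains a component of size $\|e^{\lambda t}\xi\|$, again leaving every bounded neighborhood of the equilibrium for a measure of time unbounded in $T$. In both cases the turnpike property fails, which proves the contrapositive.

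The heart of the argument is the structural equivalence between~(a) and the existence of a stabilizing non-negative solution of the algebraic Riccati equation, i.e.\ the requirement that both invariant subspaces of $\mathcal M$ be graphs over the state transversal to the boundary data. I expect the delicate part to be the detectability half of the converse: unlike an uncontrollable mode, an undetectable mode can in principle be acted on by the control, so one must show—quantitatively and uniformly in $T$—that the optimal control genuinely leaves the unpenalized unstable mode unsuppressed, namely that the zero-costate eigenvector is excited by $p(T)=0$ and not cancelled by the stable modes even in the presence of the linear cost terms $z,v$. Establishing this violation of the \emph{measure} (not merely exponential) turnpike is where the main effort lies; by contrast, the hyperbolicity and graph structure of $\mathcal M$ in the forward direction follow from the classical Riccati theory already recalled.
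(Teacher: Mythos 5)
First, a point of orientation: the paper never proves Theorem~\ref{mainfin}. It is quoted from \cite{GruG21} (Theorem~8.1 there) purely as motivation, and that reference establishes the equivalence for the \emph{measure} turnpike through strict dissipativity arguments, not through the optimality system. The paper's own contribution, Theorem~\ref{maininf}, generalizes only the implication (a)$\Rightarrow$(b) to infinite dimensions, and deliberately avoids the Hamiltonian-dichotomy picture you use, working instead through exponential convergence of $P(t)$ to $P_{\min}$ (Lemma~\ref{expcon}) and the closed range test (Lemma~\ref{test}). So there is no in-paper proof to compare against; your proposal must be judged on its own terms, and it has two genuine defects.

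In the forward implication, your splitting assumes both invariant subspaces of $\mathcal M$ are graphs over the state, the unstable one being $p=P_{\max}x$ for an anti-stabilizing solution. Under hypothesis (a) such a solution need not exist: it requires every uncontrollable mode of $(A,B)$ to have \emph{positive} real part. Concretely, for $A=-1$, $B=0$, $C=1$ (stabilizable and detectable), $\mathcal M=\bmat{-1}{0}{-1}{1}$ has unstable subspace $\{0\}\times\R$, which is not a graph over the state. What is true under (a), and what the terminal condition $p(T)=0$ actually needs, is that the unstable subspace is a graph over the \emph{costate}: $V_u=\{(-P_d\,p,\,p)\,:\,p\in\R^n\}$, where $P_d\geq 0$ is the stabilizing solution of the ARE for the dual data $(A^*,C^*,B^*)$, whose stabilizability/detectability hypotheses are exactly (a) for the original data. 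The correct transversality pairing is: $V_s$ a graph over $x$ against the condition $x(0)=x_0$, and $V_u$ a graph over $p$ against the condition $p(T)=0$ (your pairing is also crossed). With this repair the uniform-in-$T$ solvability of the boundary value problem and the two boundary layers go through, and exponential turnpike indeed implies measure turnpike.

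In the converse, the stabilizability half is sound, but the detectability half is asserted rather than proved: ``the terminal condition $p(T)=0$ excites the non-decaying mode'' is precisely the statement requiring proof, as you acknowledge. It can be closed as follows. If $A\xi=\lambda\xi$, $C\xi=0$, $\Re\lambda\geq 0$, $\xi\neq 0$, then for \emph{any} control $u$ the trajectories issued from $x_0$ and from $x_0+\delta\xi$ differ exactly by $\delta e^{\lambda t}\xi$; since $C\xi=0$, the corresponding costs differ by $2\delta\int_0^T\Re\langle z,e^{\lambda t}\xi\rangle\,dt$, a constant independent of $u$ (this also disposes of the linear terms $z,v$, which your eigenvector heuristic ignores). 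Hence the two initial data share the same optimal control, and their optimal trajectories differ exactly by $\delta e^{\lambda t}\xi$, whose norm never decays. Two trajectories at a non-decaying mutual distance cannot both stay within $\eps=\delta\|\xi\|/3$ of one and the same equilibrium outside a set of measure bounded uniformly in $T$; since this argument makes no reference to the equilibrium, the measure turnpike fails at \emph{every} controlled equilibrium, for suitable initial data in any neighborhood. This closes your gap and matches your heuristic that the optimizer never suppresses a cost-invisible mode.
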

It is natural to ask if the above theorem can be extended to the infinite dimensional setting. The main result of this paper, stated in the following theorem, provides a generalization of the implication $(a)\Sra(b)$ in infinite dimension. It must be noted that the turnpike property proved in \cite{GruG21} is the so-called measure turnpike property, while in this paper we deal with the exponential (integral) turnpike property, which is slightly stronger than the former property. In fact, our main result is more related to \cite{Zua} both in terms of techniques and in terms of the notion of turnpike. However, we refer to Theorem \ref{mainfin} from \cite{GruG21} to emphasise that even in finite dimension condition $(a)$ is necessary for measure turnpike property, thus also for exponential integral turnpike property.

We now state the main result of the paper. Its proof will be divided into several steps, developed in the following sections.

\begin{thm}\label{maininf}
Assume that $(A,B)$ is exponentially stabilizable and $(A,C)$ is exponentially detectable, then OCP \eqref{OCP} has the (exponential) turnpike property at the optimal steady state.
\end{thm}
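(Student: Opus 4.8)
The plan is to reduce to a tracking-free problem, build the optimal feedback explicitly via an adjoint steady state, and then read off the turnpike decay from the exponential convergence of the Riccati solution. First I would invoke Lemma~\ref{changestate} to assume the optimal steady state is $(x_e,u_e)=(0,0)$, working with the modified cost $\wt{\ell}$ whose linear data $\wt z:=z+C^*Cx_e$ and $\wt v:=v+K^*Ku_e$ satisfy the optimality condition \eqref{oss}, which at $(0,0)$ reads $\mathbb P_{\ker[A\;B]}\bvec{\wt z}{\wt v}=0$, i.e. $(\wt z,\wt v)\in(\ker[A\;B])^\perp$. The key structural step (the closed range test) is to note that exponential stabilizability forces $[A\;B]$ to be surjective: if $A+BF$ is exponentially stable then $0$ lies in its resolvent set, so for every $y\in\calH$ the pair $\bvec{(A+BF)^{-1}y}{F(A+BF)^{-1}y}$ is a preimage of $y$ under $[A\;B]$. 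Hence $\ran[A\;B]=\calH$ is closed, and the closed range theorem gives $(\ker[A\;B])^\perp=\ran\bvec{A^*}{B^*}$. Therefore there is an adjoint steady state $p\in D(A^*)$ with $A^*p=\wt z$ and $B^*p=\wt v$.

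The existence of $p$ lets me absorb the linear terms. Since $p$ is constant, integrating $2\Re\<\wt z,x\>+2\Re\<\wt v,u\>=2\Re\<p,Ax+Bu\>=2\Re\<p,\dot x\>$ over $[0,T]$ produces the boundary term $2\Re\<p,x(T)\>-2\Re\<p,x_0\>$, whose last summand is control-independent. Thus minimizing $\wt\ell$ is equivalent to minimizing the purely quadratic running cost with the linear terminal cost $2\Re\<p,x(T)\>$. Writing the adjoint variable as $\psi(t)=P(T-t)x^*(t)+h(t)$, with $P$ the solution of \eqref{Riccati} for $P(0)=0$, a direct computation using the differential Riccati equation cancels the terms quadratic in $x^*$ and shows that $g:=h+p$ solves the backward linear equation $\dot g=-\bigl(A-B(K^*K)^{-1}B^*P(T-\cdot)\bigr)^*g$ with $g(T)=p$. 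This yields the explicit optimal control
\begin{equation*}
u^*(t)=-(K^*K)^{-1}B^*\bigl(P(T-t)x^*(t)+g(t)\bigr),
\end{equation*}
and the closed-loop state equation $\dot x^*=\bigl(A-B(K^*K)^{-1}B^*P(T-\cdot)\bigr)x^*-B(K^*K)^{-1}B^*g$.

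It remains to extract the exponential decay. The main obstacle, and the technical heart of the argument, is to upgrade the strong convergence $P(t)\to P_{\min}$ of Proposition~\ref{findimP} to convergence in operator norm at an exponential rate, $\|P(t)-P_{\min}\|\le Me^{-\eta t}$, in infinite dimensions; I expect to obtain this from the analytic/perturbative estimates on evolution systems announced in the introduction and established as the essential lemmas of Section~\ref{infisetting}. Granting it, the closed-loop generator $A-B(K^*K)^{-1}B^*P(T-t)$ converges exponentially to the exponentially stable $A-B(K^*K)^{-1}B^*P_{\min}$, so by a perturbation argument of the type in Proposition~\ref{evolution} its forward evolution operator $\Phi(t,s)$ satisfies $\|\Phi(t,s)\|\le Me^{-w(t-s)}$. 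Solving the backward equation for $g$ from $g(T)=p$ then gives $\|g(t)\|\le Me^{-w(T-t)}$, while the variation-of-constants formula $x^*(t)=\Phi(t,0)x_0-\int_0^t\Phi(t,s)B(K^*K)^{-1}B^*g(s)\,ds$ together with the elementary bound $\int_0^t e^{-w(t-s)}e^{-w(T-s)}\,ds\le\frac{1}{2w}e^{-w(T-t)}$ yields $\|x^*(t)\|\le M(e^{-wt}+e^{-w(T-t)})$. Feeding these into the feedback formula produces the same bound for $u^*$, which is exactly the exponential turnpike estimate at $(0,0)$; undoing the shift of Lemma~\ref{changestate} transfers it to $(x_e,u_e)$, and averaging over $[aT,bT]$ recovers the integral turnpike convergence.
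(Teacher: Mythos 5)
Your proposal follows the same overall architecture as the paper's proof: reduction to the steady state $(0,0)$ via Lemma~\ref{changestate}, existence of an adjoint steady state $p$ (the paper's $w$) from stabilizability through a closed-range argument, an explicit Riccati feedback representation of $u^*$ (your $g(t)$ is exactly the paper's $\hat{U}_T(T-t,0)w$, so your formula agrees with Lemma~\ref{explicitoc}), and extraction of the turnpike bounds from exponential decay of closed-loop evolution operators. Two of your sub-arguments genuinely differ, and both are gains in economy. First, the closed-range test: observing that exponential stability of $A+BF$ gives $0\in\rho(A+BF)$ and exhibiting the explicit preimage $\sbvec{(A+BF)^{-1}y}{F(A+BF)^{-1}y}$ proves that $[A\;B]$ is \emph{surjective}, which is stronger than the paper's Lemma~\ref{test} (closedness of the range) and replaces its weak-convergence/Riesz-representation argument by two lines. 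Second, your endgame is more direct: the paper first derives an $L^2$ bound on $\wt{u}^*$ by comparing the optimal cost against a truncated control (estimate~\eqref{wtubound}) and only then proves the decay~\eqref{finest1} of $\hat{U}_T$ to bound $u^*$ pointwise; you skip the variational comparison entirely and get pointwise bounds on $g$, $x^*$, $u^*$ from the feedback formula and variation of constants, with the elementary convolution bound doing the rest. One caveat: since a Pontryagin-type optimality system is not available off the shelf here, the optimality of your feedback should be verified directly by completing the square (as the paper does via~\eqref{cov}), not merely derived from the adjoint ansatz; your computation essentially contains this verification, but it should be stated as such.

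The one substantive under-justification is the uniform-in-$T$ exponential bound $\|\Phi(t,s)\|\le Me^{-w(t-s)}$, which you attribute to ``a perturbation argument of the type in Proposition~\ref{evolution}.'' Proposition~\ref{evolution} yields only well-posedness and regularity of the perturbed evolution equation; it gives no decay rate, let alone one with constants independent of $T$. Moreover, the naive perturbation picture fails near the terminal time: the perturbation $B(K^*K)^{-1}B^*\bigl(P_{\min}-P(T-t)\bigr)$ is small only when $T-t$ is large, whereas for $t$ close to $T$ one has $P(T-t)\approx P(0)=0$, so there the generator is close to the possibly unstable $A$, not to the stable $A-B(K^*K)^{-1}B^*P_{\min}$. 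This is exactly what the paper's Lemma~\ref{exprate} handles, by a Gr\"onwall argument split into cases ($\tau\ge T-S$, $t\le T-S$, and the mixed case glued through the cocycle identity $U_T(t,\tau)=U_S(S-T+t,0)U_{T-\tau}(T-S-\tau,0)$); some argument of this type is unavoidable in your route as well. Relatedly, your claim that ``solving the backward equation for $g$'' yields $\|g(t)\|\le Me^{-w(T-t)}$ needs the identification $g(t)=\Phi(T,t)^*p$ (so the bound follows from $\|\Phi(T,t)\|$ by taking adjoints), or else a separate estimate on the adjoint evolution system as in~\eqref{finest1}; as stated it does not follow from the forward bound without that extra step. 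Once Lemma~\ref{exprate} (or a proof of it) and this identification are supplied, your argument closes correctly and delivers the exponential estimate, hence also the integral turnpike property by averaging.
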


\section{Methods}\label{infisetting}

\subsection{Exponential convergence of $P(t)$ to $P_{\min}$} 

The proof of our main result crucially relies on the exponential convergence of the solution to the differential Riccati equation $P(t)$ to the solution of the algebraic Riccati equation $P_{\min}$, that is, to the generalization of Proposition~\ref{findimP} in the infinite dimensional setting. This turns out to be a consequence of both the stabilizability and detectability of the system. This section is devoted to the proof of this result, preceded by two preliminary lemmas. The same sort of estimate was used in a critical way in~\cite[Lemma 3.5 and Lemma 3.9]{Zua}, where it was proved by combining a careful analysis of the optimality system and classical parabolic equation techniques from~\cite{Lions}. In this work, instead, in order to cope with the generalized optimal control problem~\eqref{OCP}-\eqref{sy}, we must extend the estimate in a more abstract framework, by means of semigroup methods.

\begin{lem}\label{detect}
Assume $(A,C)$ is exponentially detectable, then there exists a constant $M>0$ such that for every $u\in L^2([0,T],\calU), x_0\in \calH$, we have
\begin{equation}\label{deteest}
\|x(T)\|^2\leq M \left(\int_0^T\|Cx(t)\|^2+\|u(t)\|^2dt+\|x_0\|^2\right),
\end{equation}
where $x$ is the solution of system \eqref{sy}.
\end{lem}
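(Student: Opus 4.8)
The plan is to exploit the exponential detectability hypothesis to rewrite the dynamics in terms of an exponentially stable semigroup, so that the resulting uniform decay makes the constant $M$ independent of $T$. By assumption there is $L\in L(\calY,\calH)$ such that $A_L:=A+LC$ generates an exponentially stable semigroup $\calS=(\calS_t)_{t\ge 0}$, i.e.\ $\|\calS_t\|\le M_0 e^{-\omega t}$ for suitable $M_0,\omega>0$. The crucial observation is that $Ax=A_Lx-LCx$, so the mild solution $x$ of \eqref{sy} can be recast as the mild solution of $\dot x=A_Lx+(Bu-LCx)$; since $LC$ is a bounded perturbation of $A$ and mild solutions are unique, this rewriting is legitimate and yields the variation-of-constants representation
$$
x(t)=\calS_t x_0+\int_0^t\calS_{t-s}\bigl(Bu(s)-LCx(s)\bigr)\,ds,\qquad t\in[0,T].
$$
Here the key gain is that the perturbation term $LCx$ is measured precisely by $\|Cx\|$, which is exactly one of the quantities appearing on the right-hand side of \eqref{deteest}.

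Next I would estimate $\|x(T)\|$ directly from this formula. Using $\|\calS_{T-s}\|\le M_0 e^{-\omega(T-s)}$ and the triangle inequality,
$$
\|x(T)\|\le M_0 e^{-\omega T}\|x_0\|+M_0\int_0^T e^{-\omega(T-s)}\bigl(\|B\|\,\|u(s)\|+\|L\|\,\|Cx(s)\|\bigr)\,ds.
$$
To pass to the square and isolate the $L^2$-norms, I would apply the Cauchy--Schwarz inequality to the convolution integral, splitting $e^{-\omega(T-s)}=e^{-\omega(T-s)/2}\cdot e^{-\omega(T-s)/2}$ and using $\int_0^T e^{-\omega(T-s)}\,ds\le 1/\omega$ together with $e^{-\omega(T-s)}\le 1$. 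This bounds the integral term by a constant multiple of $\bigl(\int_0^T\|u\|^2+\|Cx\|^2\,dt\bigr)^{1/2}$. Squaring and using $(p+q)^2\le 2p^2+2q^2$ repeatedly then produces
$$
\|x(T)\|^2\le M\Bigl(\int_0^T\|Cx(t)\|^2+\|u(t)\|^2\,dt+\|x_0\|^2\Bigr),
$$
with $M$ built from $M_0,\omega,\|B\|,\|L\|$, and hence independent of $T$, which is the claim.

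The main obstacle --- and essentially the only point requiring care --- is justifying the perturbed representation formula rigorously: namely that the mild solution of $\dot x=Ax+Bu$ relative to $\calT$ coincides with the mild solution of $\dot x=A_Lx+(Bu-LCx)$ relative to $\calS$. This is a standard consequence of the bounded perturbation theorem for $C_0$-semigroups (the invariance of mild solutions under bounded perturbations of the generator), and it can also be read off from the existence--uniqueness theory underlying Proposition~\ref{evolution}, applied to the generator $A_L$ with the fixed $L^2$-forcing $g:=Bu-LCx$ (which lies in $L^2$ since $x$ is continuous, whence $Cx$ is continuous). Once this identity is in hand, every remaining step is an elementary estimate, and the essential structural input is simply that detectability converts the unbounded, possibly unstable drift $A$ into the exponentially stable $A_L$, at the cost of an additional term controlled precisely by $\|Cx\|$.
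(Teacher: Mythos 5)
Your proof is correct, and it is essentially the primal counterpart of the paper's dual argument: both rest on the same structural idea (detectability lets you trade the possibly unstable generator $A$ for the exponentially stable $A+LC$, at the price of a forcing term controlled exactly by $\|Cx\|$), but the execution differs. The paper never writes a perturbed variation-of-constants formula; instead it pairs the mild solution $x$ against classical solutions of the adjoint equation $\dot\phi=(A^*+C^*F^*)\phi$, $\phi(0)=\phi_0\in D(A^*)$, computes $\frac{d}{dt}\langle x(t),\phi(T-t)\rangle$, bounds $\langle x(T),\phi_0\rangle$ by H\"older's inequality using the uniform-in-$T$ bounds $\|\phi\|_{L^\infty},\|\phi\|_{L^2}\leq M\|\phi_0\|$ supplied by exponential stability, and finally lets $\phi_0\to x(T)$. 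If you unwind that pairing, the paper's integral identity is precisely the weak form of your representation $x(T)=\calS_Tx_0+\int_0^T\calS_{T-s}\bigl(Bu(s)-LCx(s)\bigr)\,ds$, so the two proofs are dual renderings of one computation. The trade-off is this: your route is more direct and produces the constant $M$ explicitly from $M_0,\omega,\|B\|,\|L\|$, but its rigour hinges on the interchange lemma (equivalence of mild solutions under bounded perturbation of the generator), which you correctly single out as the one step needing justification; the paper's route avoids invoking that lemma altogether, since the pairing computation only requires $\phi(T-\cdot)$ to be a classical solution (automatic for $\phi_0\in D(A^*)$) while $x$ enters only through duality, at the cost of the extra limiting step $\phi_0\to x(T)$. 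A side benefit of the dual formulation is symmetry: Lemma \ref{stab} then follows as ``the dual version'' with no further work, just as your argument would apply verbatim to $A^*$ with $B^*$ in the role of $C$.
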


\begin{proof}
Since $(A,C)$ is exponentially detectable, there exists some $F\in L(\calY,\calH)$ such that $A+FC$ is exponentially stable.
Let $\phi_0\in D(A^*)$, and let $\phi$ be the solution to
\begin{equation*}
\addtolength{\jot}{5pt}
\left\{
\begin{alignedat}{2}
&\frac{d\phi}{dt}=(A^*+C^*F^*)\phi\,\,\,\,\text{in}\,\,[0,T],\\
&\phi(0)=\phi_0.
\end{alignedat}
\right.
\end{equation*}
Then, owing to the exponential stability of $A^*+C^*F^*$, there exists some $M>0$ such that for arbitrary $T> 0$, we have {$\|\phi\|_{L^\infty([0,T],\calH)}\leq M\|\phi_0\|$ and $\|\phi\|_{L^2([0,T],\calH)}\leq M\|\phi_0\|$.}
Notice that
\begin{align*}
\frac{d\left\<x(t),\phi(T-t)\right\>}{dt}&=\left\<x(t),-(A^*+C^*F^*)\phi(T-t)\right\>+\left\<Ax(t)+Bu(t),\phi(T-t)\right\>_{\calH_{-1},\calH_{1}^d}\\
&=-\<Cx(t),F^*\phi(T-t)\>+\<Bu(t),\phi(T-t)\>.
\end{align*}
So, applying Hölder's inequality, we have that
\begin{align*}
\<x(T),\phi_0\>=& \<x_0,\phi(T)\>+\int_0^T-\left\<Cx(t),F^*\phi(T-t)\right\>+\left\<Bu(t),\phi(T-t)\right\>dt\\
\leq&
\|x_0\|\left\|\phi(T)\right\|+\left\|Cx\right\|_{L^2([0,T],\calY)}\|F^*\phi\|_{L^2([0,T],\calY)}+\|Bu\|_{L^2([0,T],\calH)}\|\phi\|_{L^2([0,T],\calH)}\\
\leq& M_1\|x_0\|\|\phi_0\|+M_2\|Cx\|_{L^2([0,T],\calY)}\|\phi_0\|+M_3\|u\|_{L^2([0,T],\calU)}\|\phi_0\|\\
\leq& \sqrt{M}\|\phi_0\|\left(\|Cx\|^2_{L^2([0,T],\calY)}+\|u\|^2_{L^2([0,T],\calU)}+\|x_0\|^2\right)^{\frac{1}{2}}
\end{align*}
for some constants $M_1, M_2, M_3$ and $M>0$.

Now, by letting $\phi_0\to x(T)$ in $\calH$, we get the estimate~\eqref{deteest}.
\end{proof}
\begin{lem}\label{stab}
Assume $(A,B)$ is exponentially stabilizable, then there exists $M>0$ such that, for every $f\in L^2([0,T],\calY)$ and $p_0\in \calH$, we have
\begin{equation}\label{stabest}
\|p(T)\|^2\leq M \left(\int_0^T\|B^*p(t)\|^2+\|f(t)\|^2dt+\|p_0\|^2\right)\; ,
\end{equation}
where $p$ is the solution of
\begin{equation*}
\addtolength{\jot}{5pt}
\left\{
\begin{alignedat}{2}
&\frac{dp}{dt}=A^*p+C^*f\,\,\,\,\text{in}\,\,[0,T],\\
&p(0)=p_0.
\end{alignedat}
\right.
\end{equation*}
\end{lem}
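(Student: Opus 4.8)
The plan is to establish this as the precise dual of Lemma~\ref{detect}. Exponential stabilizability of $(A,B)$ furnishes $F\in L(\calH,\calU)$ with $A+BF$ exponentially stable; taking adjoints shows $A^*+F^*B^*$ is exponentially stable, i.e. $(A^*,B^*)$ is exponentially detectable. Since $p$ solves the $A^*$-system $\dot p=A^*p+C^*f$ with ``output'' $B^*p$, this is exactly the setting of Lemma~\ref{detect} under the substitution $A\mapsto A^*$, $B\mapsto C^*$, $C\mapsto B^*$, $u\mapsto f$, $x_0\mapsto p_0$. One could simply invoke Lemma~\ref{detect} in this form, but I prefer to repeat the duality computation so that the argument stays self-contained.

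Concretely, I would fix $\psi_0\in D(A)$ and let $\psi$ solve $\frac{d\psi}{dt}=(A+BF)\psi$, $\psi(0)=\psi_0$. Exponential stability of $A+BF$ gives, uniformly in $T$, the bounds $\|\psi\|_{L^\infty([0,T],\calH)}\le M\|\psi_0\|$ and $\|\psi\|_{L^2([0,T],\calH)}\le M\|\psi_0\|$. The central step is to differentiate the pairing $\langle p(t),\psi(T-t)\rangle$: interpreting the $A^*$-term in the duality $\langle\cdot,\cdot\rangle_{\calH_{-1}^d,\calH_1}$ (legitimate since $p(t)\in\calH$ forces $A^*p(t)\in\calH_{-1}^d$, while $\psi(T-t)\in D(A)=\calH_1$), the two generator terms cancel through $\langle A^*p,\psi\rangle_{\calH_{-1}^d,\calH_1}=\langle p,A\psi\rangle$, leaving
\begin{equation*}
\frac{d}{dt}\langle p(t),\psi(T-t)\rangle=\langle f(t),C\psi(T-t)\rangle-\langle B^*p(t),F\psi(T-t)\rangle.
\end{equation*}
Integrating over $[0,T]$ and using $\psi(0)=\psi_0$ then yields
\begin{equation*}
\langle p(T),\psi_0\rangle=\langle p_0,\psi(T)\rangle+\int_0^T\langle f(t),C\psi(T-t)\rangle\,dt-\int_0^T\langle B^*p(t),F\psi(T-t)\rangle\,dt.
\end{equation*}

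From here I would apply H\"older's inequality together with the boundedness of $C$ and $F$ and the uniform $\psi$-bounds, absorbing constants to obtain
\begin{equation*}
|\langle p(T),\psi_0\rangle|\le\sqrt{M}\,\|\psi_0\|\Big(\|B^*p\|_{L^2([0,T],\calU)}^2+\|f\|_{L^2([0,T],\calY)}^2+\|p_0\|^2\Big)^{1/2}.
\end{equation*}
Since $D(A)$ is dense in $\calH$ and both sides depend continuously on $\psi_0$, I would pass to general $\psi_0\in\calH$ and then set $\psi_0=p(T)$, which after dividing by $\|p(T)\|$ gives exactly~\eqref{stabest}.

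The main obstacle is the functional-analytic bookkeeping underlying the differentiation of $\langle p(t),\psi(T-t)\rangle$ and the cancellation of the generator terms in the correct duality scale, the dual counterpart of what makes Lemma~\ref{detect} work. This is supplied by Proposition~\ref{evolution} applied to the generator $A^*$ (with perturbation zero and source $C^*f$), which ensures $p\in H^1((0,T),\calH_{-1}^d)\cap C([0,T],\calH)$ with $\dot p=A^*p+C^*f$ holding in $\calH_{-1}^d$, so that pairing against $\psi(T-t)\in\calH_1$ is differentiable and the computation is valid. The concluding density step replacing $\psi_0$ by $p(T)$ is then routine.
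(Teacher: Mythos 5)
Your proposal is correct and takes exactly the approach the paper intends: the paper itself states that Lemma~\ref{stab} ``is just the dual version of Lemma~\ref{detect}'' and omits the proof, and your argument is precisely that duality, both in the abstract form (stabilizability of $(A,B)$ gives detectability of $(A^*,B^*)$, so Lemma~\ref{detect} applies under the substitution $A\mapsto A^*$, $B\mapsto C^*$, $C\mapsto B^*$) and in the written-out computation, which mirrors the proof of Lemma~\ref{detect} step by step with the correct duality pairing $\langle A^*p,\psi\rangle_{\calH_{-1}^d,\calH_1}=\langle p,A\psi\rangle$ and the same density argument $\psi_0\to p(T)$. No gaps.
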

This is just the dual version of Lemma \ref{detect}, so we skip the proof.

\begin{lem}\label{expcon}
(Exponential convergence rate of $P$) If $(A,B)$ is exponentially stabilizable and $(A,C)$ is exponentially detectable, then there exist some positive numbers $M,\beta$ such that
\begin{equation}\label{expP}
\|P_{\min}-P(t)\|\leq Me^{-\beta t},\quad \forall t\geq0.
\end{equation}
\end{lem}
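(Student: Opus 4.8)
The plan is to avoid working directly with the operator Riccati equation for $\Pi(t):=P_{\min}-P(t)\ge 0$, whose nonlinear term $\Pi N\Pi$ (with $N:=B(K^*K)^{-1}B^*$) is not readily dominated by a Gronwall estimate, and instead to reinterpret $\Pi(T)$ through the optimality systems of the two control problems behind $P(T)$ and $P_{\min}$. Fix $x_0\in\calH$. Let $(x_\infty,u_\infty)$ be the optimal pair of the infinite-horizon problem \eqref{infOCP}, so that $u_\infty=-(K^*K)^{-1}B^*P_{\min}x_\infty$, $x_\infty(t)=S_tx_0$ with $S=(S_t)_{t\ge0}$ the exponentially stable semigroup generated by $A-B(K^*K)^{-1}B^*P_{\min}$, and set $\varphi_\infty:=P_{\min}x_\infty$. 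Let $(x_T,u_T)$ be the optimal pair of \eqref{OCP} with cost $\|Cx\|^2+\|Ku\|^2$ on $[0,T]$, with $u_T=-(K^*K)^{-1}B^*P(T-\cdot)x_T$, and set $\varphi_T(t):=P(T-t)x_T(t)$. A direct computation using \eqref{Riccati} and \eqref{ARiccati} shows that both adjoints solve $\dot\varphi=-A^*\varphi-C^*Cx$ with their respective states. Evaluating $\psi:=\varphi_T-\varphi_\infty$ at $t=0$ yields the key identity $\psi(0)=(P(T)-P_{\min})x_0=-\Pi(T)x_0$, so it suffices to prove $\|\psi(0)\|\le Me^{-\beta T}\|x_0\|$ with constants independent of $T$ and $x_0$.

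Setting $y:=x_T-x_\infty$ and $w:=u_T-u_\infty=-(K^*K)^{-1}B^*\psi$, the pair $(y,\psi)$ solves the \emph{linear} forward–backward system $\dot y=Ay-N\psi$ with $y(0)=0$, and $\dot\psi=-A^*\psi-C^*Cy$, with terminal datum $\psi(T)=\varphi_T(T)-\varphi_\infty(T)=-P_{\min}x_\infty(T)=-P_{\min}S_Tx_0$. Exponential stability of $S$ gives at once $\|\psi(T)\|\le\|P_{\min}\|\,M_0e^{-\omega T}\|x_0\|$, and this exponentially small terminal condition is the only forcing of the system; propagating its smallness back to $t=0$ is the heart of the proof.

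To do so I would first record the dissipation identity obtained by differentiating the pairing $\langle y,\psi\rangle$: the $A$-terms cancel by the definition of the adjoint, leaving $\tfrac{d}{dt}\langle y,\psi\rangle=-\|Cy\|^2-\langle N\psi,\psi\rangle$, where $\langle N\psi,\psi\rangle=\|(K^*K)^{-1/2}B^*\psi\|^2$. Integrating over $[0,T]$ and using $y(0)=0$ gives $Q:=\int_0^T\|Cy\|^2+\|(K^*K)^{-1/2}B^*\psi\|^2\,dt=-\langle y(T),\psi(T)\rangle\le\|y(T)\|\,\|\psi(T)\|$. Applying Lemma~\ref{detect} to $\dot y=Ay+Bw$ with $y(0)=0$, and using coercivity of $K$ to bound $\|w\|$ by $\|(K^*K)^{-1/2}B^*\psi\|$, one gets $\|y(T)\|^2\le M'Q$; combined with the previous inequality this yields $\|y(T)\|\le M'\|\psi(T)\|$ and hence $Q\le M'\|\psi(T)\|^2$. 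Finally, reversing time in the $\psi$-equation and invoking Lemma~\ref{stab}, together with $\int_0^T\|B^*\psi\|^2\le\|K^*K\|\,Q$ and $\int_0^T\|Cy\|^2\le Q$, one obtains $\|\psi(0)\|^2\le M(Q+\|\psi(T)\|^2)\le M''\|\psi(T)\|^2$. Chaining these gives $\|\Pi(T)x_0\|=\|\psi(0)\|\le\sqrt{M''}\,\|P_{\min}\|\,M_0e^{-\omega T}\|x_0\|$, whence \eqref{expP} with $\beta=\omega$ after taking the supremum over $\|x_0\|=1$.

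The main obstacle is precisely the nonlinearity that this reformulation is designed to circumvent: on the linearized difference system the only nontrivial coupling between the detectability estimate (Lemma~\ref{detect}, controlling $\|y(T)\|$) and the stabilizability estimate (Lemma~\ref{stab}, controlling $\|\psi(0)\|$) is furnished by the dissipation identity for $\langle y,\psi\rangle$, and closing the constants requires that the \emph{same} quantity $Q$ appear on both sides. A secondary technical point is the rigorous justification of the energy computation and of the adjoint equations: since $y(t)$ and $\psi(t)$ need not lie in $D(A)$ pointwise, I would work with mild solutions and the duality pairings of the functional framework (as in Proposition~\ref{evolution}), verifying the cancellation of the $A$-terms by density, and I would check via \eqref{Riccati} and \eqref{ARiccati} that $\varphi_T=P(T-\cdot)x_T$ and $\varphi_\infty=P_{\min}x_\infty$ genuinely solve $\dot\varphi=-A^*\varphi-C^*Cx$.
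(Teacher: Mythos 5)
Your proposal is correct and follows essentially the same route as the paper's own proof: you compare the closed-loop trajectories generated by the feedbacks $-(K^*K)^{-1}B^*P_{\min}$ and $-(K^*K)^{-1}B^*P(T-\cdot)$, pair their adjoints $P_{\min}x_\infty$ and $P(T-\cdot)x_T$ through the same duality/dissipation identity, and close the estimate with Lemma~\ref{detect}, Lemma~\ref{stab}, and the exponential stability of $A-B(K^*K)^{-1}B^*P_{\min}$, exactly as the paper does (your forward-time adjoint $\psi$ is just the time reversal of the paper's $\wt{p}-p$, and your endpoint identity $\psi(0)=(P(T)-P_{\min})x_0$ is the paper's final step). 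The only cosmetic differences are your explicit bookkeeping of the $(K^*K)^{-1}$ factors via coercivity and your framing of the trajectories as optimal pairs of the two control problems.
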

\begin{proof}
Fix some $T>0$. We denote by $x$ the solution of
\begin{equation}\label{x}
\addtolength{\jot}{5pt}
\left\{
\begin{alignedat}{2}
&\frac{dx}{dt}=(A-BB^*P_{\min})x\,\,\,\,\text{in}\,\,[0,T],\\
&x(0)=x_0\in \calH.
\end{alignedat}
\right.
\end{equation}
Define $p(t):=P_{\min}x(T-t)$ in $[0,T]$. We claim that $p$ coincides with the solution of
\begin{equation}\label{p}
\addtolength{\jot}{5pt}
\left\{
\begin{alignedat}{2}
&\frac{dp(t)}{dt}=A^*p(t)+C^*Cx(T-t)\,\,\,\,\text{in}\,\,[0,T],\\
&p(0)=P_{\min}x(T).
\end{alignedat}
\right.
\end{equation}
In fact, suppose $x_0\in D(A)$, then for any $y\in D(A)$, we have
\begin{align}\label{expconest1}
\begin{split}
\frac{d\<p(t),y\>}{dt}=&-\<P_{\min}Ax(T-t),y\>+\<P_{\min}BB^*P_{\min}x(T-t),y\>\\
=&\<A^*P_{\min}x(T-t)+C^*Cx(T-t),y\>_{\calH_{-1}^d,\calH_1}.
\end{split}
\end{align}
So, we have $\frac{dp(t)}{dt}=A^*p(t)+C^*Cx(T-t)$ in $\calH_{-1}^d$. 

Now, for any $x_0\in \calH$, we let $(z_n)_{n\in \N}\subset D(A)$ be a sequence such that $\lim_{n\to \infty}z_n=x_0$ in $\calH$. We use $x_n$ and $p_n$ to denote the solutions of \eqref{x} and \eqref{p} with $x_0$ replaced by $z_n$. It is clear that $x_n\to x$ in the $L^2$ norm and $P_{\min}x_n(T)\to P_{\min}x(T)$ as $n\to\infty$. So, we have that $p_n$ converges to the solution of \eqref{p} pointwisely. It is also clear that $p_n(t)=P_{\min}x_n(T-t)\to P_{\min}x(T-t)$ pointwisely. So, we must have that $p(t)$ is equal to the solution of \eqref{p}.

Let $P$ be the solution of the differential Riccati equation \eqref{Riccati} with $P_0=0$. Consider the following evolution problem
\begin{equation}\label{tx}
\addtolength{\jot}{5pt}
\left\{
\begin{alignedat}{2}
&\frac{d\wt{x}(t)}{dt}=(A-BB^*P(T-t))\wt{x}(t)\,\,\,\,\text{in}\,\,[0,T],\\
&\wt{x}(0)=x_0.
\end{alignedat}
\right.
\end{equation}
{Notice that $P(T-\cdot)\in C_s([0,T],L(\calH))$. Thanks to Proposition~\ref{evolution}, there exists a unique solution $\wt{x}\in H^1([0,T],\calH^{-1})\cap C([0,T],\calH)$ of problem \eqref{tx}.} 
Define $\wt{p}(t):=P(t)\wt{x}(T-t)$ in $[0,T]$. Similarly as before, we claim 
that $\wt{p}$ coincides with the solution of
\begin{equation}\label{tp}
\addtolength{\jot}{5pt}
\left\{
\begin{alignedat}{2}
&\frac{d\wt{p}(t)}{dt}=A^*\wt{p}(t)+C^*C\wt{x}(T-t)\,\,\,\,\text{in}\,\,[0,T],\\
&\wt{p}(0)=0.
\end{alignedat}
\right.
\end{equation}
Indeed, if $x_0\in D(A)$, then by Remark \ref{whystrictsolution} and Proposition \ref{evolution} we have 
\begin{equation}\label{wtxreg}
\wt{x}\in C^1([0,T],\calH)\cap C([0,T],D(A)).
\end{equation}
{Let $t\in[0,T]$ and $h\in\R$ be sufficiently small.} Observe that
\begin{align*}
\frac{\wt{p}(t+h)-\wt{p}(t)}{h}=P(t+h)\left(\frac{\wt{x}\left(T-(t+h)\right)-\wt{x}(T-t)}{h}+(A-BB^*P(T-t))\wt{x}(T-t)\right)\\
-P(t+h)(A-BB^*P(T-t))\wt{x}(T-t)+\frac{P(t+h)-P(t)}{h}\wt{x}(T-t).
\end{align*}
By the uniform boundedness principle applied to $P$, \eqref{tx}, \eqref{wtxreg} and Remark \ref{whystrictsolution}, taking $h\to0$, we obtain
\begin{align}\label{tpest}
\begin{split}
\frac{d\wt{p}(t)}{dt}&=-P(t)(A-BB^*P(t))\wt{x}(T-t)\\
&\qquad\quad+(\calA(P(t))-P(t)BB^*P(t)+C^*C)\wt{x}(T-t)\\
&=(\calA(P(t))-P(t)A)\wt{x}(T-t)+C^*C\wt{x}(T-t)\\
&=A^*P(t)\wt{x}(T-t)+C^*C\wt{x}(T-t)\\
&=A^*\wt{p}(t)+C^*C\wt{x}(T-t)
\end{split}
\end{align}
in $\calH^{-1}_d$. Since $\wt{p}(0)=0$, this proves our claim for the case $x_0\in D(A)$.

Now for any $x_0\in\calH$, we let $(\wt{z}_n)_{n\in \N}\subset D(A)$ be a sequence such that $\lim_{n\to \infty}\wt{z}_n=x_0$ in $\calH$. We use $\wt{x}_n$ and $\wt{p}_n$ to denote the solutions of \eqref{tx} and \eqref{tp} with $x_0$ replaced by $\wt{z}_n$. By~\cite[Proposition 3.6]{Bensou}, $\wt{x}_n\to \wt{x}$ uniformly (thus also in $L^2$ norm), so $\wt{p}_n$ converges to the solution of \eqref{tp} pointwisely. It's clear $\wt{p}_n$ also converges to $\wt{p}$ pointwisely. So, the solution of \eqref{tp} coincides with $\wt{p}$.

Now we claim that following inequality holds for any $x_0\in\calH$:
\begin{equation}\label{dual}
\int_0^T\|B^*(p(t)-\wt{p}(t))\|^2+\|C(x(t)-\wt{x}(t))\|^2dt\leq\|p(0)\|\|\wt{x}(T)-x(T)\|\; ,
\end{equation}
where $x,p,\wt{x}$ and $\wt{p}$ are the solutions of \eqref{x}, \eqref{p}, \eqref{tx} and \eqref{tp}, respectively. 
To prove this, we first assume that $x_0\in D(A)$. Since $x,\,\wt{x}\in C([0,T],D(A))$, we have
\begin{align*}
&\frac{d\<\wt{p}(t)-p(t),\wt{x}(T-t)-x(T-t)\>}{dt}\\
&\qquad=-\<p^*(t)-p(t),A(\wt{x}(T-t)-x(T-t))\>+\<\wt{p}(t)-p(t),BB^*(\wt{p}(t)-p(t))\>\\
&\qquad\qquad\,\,+\<A^*(\wt{p}(t)-p(t)),\wt{x}(T-t)-x(T-t)\>_{\calH_{-1}^d,\calH_1}\\
&\qquad\qquad\,\,+\<C^*C(\wt{x}(T-t)-x(T-t)),\wt{x}(T-t)-x(T-t)\>\\
&\qquad=\|C(x(T-t)-\wt{x}(T-t))\|^2+\|B^*(p(t)-\wt{p}(t))\|^2.
\end{align*}
So, we have that
\begin{align}\label{ana1}
\begin{split}
&\int_0^T\|B^*(p(t)-\wt{p}(t))\|^2+\|C(x(t)-\wt{x}(t))\|^2dt\\
&\qquad=\<\wt{p}(T)-p(T),\wt{x}(0)-x(0)\>-\<\wt{p}(0)-p(0),\wt{x}(T)-x(T)\>\\
&\qquad=\<p(0),\wt{x}(T)-x(T)\>\\
&\qquad\leq\|p(0)\|\|\wt{x}(T)-x(T)\|\; .
\end{split}
\end{align}
Since for any $x_0\in \calH$, we can find a sequence $(z_n)_{n\in \N}\subset D(A)$ so that $\lim_{n\to \infty}z_n=x_0$ in $\calH$, and the resulting trajectories $x_n,p_n,\wt{x}_n$ and $\wt{p}_n$ will converge to $x,p,\wt{x}$ and $\wt{p}$ both pointwisely and in $L^2$ norm, \eqref{dual} still holds for all $x_0\in \calH$.

Notice that
$$
\frac{d(\wt{x}(t)-x(t))}{dt}=A(\wt{x}(t)-x(t))-BB^*(\wt{p}(T-t)-p(T-t)).
$$
By Lemma \ref{detect}, we have that, for some constant $M>0$,
\begin{equation}\label{diffxest0}
\|\wt{x}(T)-x(T)\|^2\leq M\int_0^T\|B^*(p(t)-\wt{p}(t))\|^2+\|C(x(t)-\wt{x}(t))\|^2dt.
\end{equation}
Applying \eqref{diffxest0} to \eqref{ana1}, we obtain
\begin{equation}\label{diffxest}
\int_0^T\|B^*(p(t)-\wt{p}(t))\|^2+\|C(x(t)-\wt{x}(t))\|^2dt\leq M\|p(0)\|^2.
\end{equation}
Since
$$
\frac{d(\wt{p}(t)-p(t))}{dt}=A^*(\wt{p}(t)-p(t))-C^*C(\wt{x}(T-t)-x(T-t)),
$$
by Lemma \ref{stab} there exists some $M>0$ such that
\begin{equation}\label{diffpest}
\|\wt{p}(T)-p(T)\|^2\leq M\left(\int_0^T\|B^*(p(t)-\wt{p}(t))\|^2+\|C(x(t)-\wt{x}(t))\|^2dt+\|p(0)\|^2\right).
\end{equation}
Since $(A,C)$ is exponentially detectable, $A-BB^*P_{\min}$ generates an exponentially stable semigroup. So, there exists some constants $M,\beta>0$ such that
\begin{equation}\label{pexp}
\|p(0)\|=\|P_{\min}x(T)\|\leq Me^{-\beta T}\|x_0\|.
\end{equation}
Substituting \eqref{diffxest} and \eqref{pexp} into \eqref{diffpest}, we get, for some $M>0$,
\begin{equation*}
\|\wt{p}(T)-p(T)\|^2\leq Me^{-2\beta T}\|x_0\|^2.
\end{equation*}
Finally, since $\|\wt{p}(T)-p(T)\|=\|\left(P_{\min}-P(T)\right)x_0\|$, we get \eqref{expP}.
\end{proof}

\subsection{Closed range test}\label{closedrange}

In this subsection, we introduce a closed range test which enables us to find the explicit solution of the optimal control problem~\eqref{OCP}.

\begin{lem}\label{test}
If $(A,B)$ is exponentially stabilizable, then $\ran[A\;B]$ is closed.
\end{lem}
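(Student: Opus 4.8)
The plan is to prove the stronger fact that $[A\;B]$ is surjective, whence closedness of $\ran[A\;B]=\calH$ is immediate. The guiding idea is that exponential stabilizability produces a bounded feedback $F$ turning $A$ into a closed-loop generator that is boundedly invertible, and this invertibility is exactly what lets us solve $Ax+Bu=y$ for arbitrary $y\in\calH$.

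First I would invoke exponential stabilizability to fix $F\in L(\calH,\calU)$ such that $A+BF$ generates an exponentially stable semigroup $(\calS_t)_{t\ge0}$. Since $B\in L(\calU,\calH)$ and $F\in L(\calH,\calU)$, the perturbation $BF$ lies in $L(\calH)$, so by the bounded perturbation theorem $A+BF$ is indeed the generator of a $C_0$-semigroup with unchanged domain $D(A+BF)=D(A)$. Next, exponential stability, i.e.\ $\|\calS_t\|\le Me^{-\omega t}$ with $\omega>0$, forces the spectral bound of $A+BF$ to be negative, so that $0\in\rho(A+BF)$ and $(A+BF)^{-1}\in L(\calH)$; concretely the inverse is given by the norm-convergent integral $-\int_0^\infty\calS_t\,dt$.

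Finally I would check surjectivity directly: given $y\in\calH$, set $x:=(A+BF)^{-1}y\in D(A)$ and $u:=Fx\in\calU$, so that
$$
[A\;B]\bvec{x}{u}=Ax+Bu=Ax+BFx=(A+BF)x=y.
$$
Thus $\ran[A\;B]=\calH$ is closed. There is no genuine analytic obstacle in this argument; the only point requiring care is the standard passage from exponential stability of the closed-loop semigroup to bounded invertibility of $A+BF$ at the origin, together with the observation that the bounded feedback does not alter the domain $D(A)$. Everything else reduces to a single substitution, so the main conceptual content is simply recognizing that stabilizability yields not merely closedness but full surjectivity of $[A\;B]$.
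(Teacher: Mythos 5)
Your proof is correct, and it actually establishes a stronger fact than the lemma: with $F\in L(\calH,\calU)$ stabilizing, $BF\in L(\calH)$ so $A+BF$ generates a $C_0$-semigroup with domain $D(A)$; exponential stability makes the growth bound negative, so $0\in\rho(A+BF)$ with $(A+BF)^{-1}=-\int_0^\infty \calS_t\,dt$, and the substitution $x:=(A+BF)^{-1}y$, $u:=Fx$ shows $\ran[A\;B]=\calH$. This is genuinely different from the paper's route, which never touches the resolvent of the closed loop: the paper works on the adjoint side, taking a sequence $\bvec{A^*}{B^*}w_n\to\bvec{z}{v}$ and producing a weak limit $w\in D(A^*)$ with $A^*w=z$, $B^*w=v$, via the trajectory identity $\int_0^T 2\Re\<z_n,x(t)\>+2\Re\<v_n,u(t)\>\,dt=2\Re\<w_n,x(T)-x_0\>$, the invertibility of $\calT^{cl}_T-I$ (to see that $x(T)-x_0$ sweeps all of $\calH$ and to get a uniform bound), and a real version of Riesz representation; closedness of $\ran[A\;B]$ then follows from the closed range theorem. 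Your argument is shorter and more elementary, and your stronger conclusion still yields everything the paper uses downstream: surjectivity of $[A\;B]$ gives, again by the closed range theorem, that $\ker[A\;B]^\perp=\ol{\ran}\bvec{A^*}{B^*}=\ran\bvec{A^*}{B^*}$ (and in fact also that $\bvec{A^*}{B^*}$ is injective and bounded below), which is exactly what is needed in Section~\ref{closedrange} to define $w$ in \eqref{charw}. What the paper's duality scheme buys is that it manipulates only trajectories, weak limits, and duality pairings, which is the kind of argument that survives when $B$ or the feedback is no longer bounded (the unbounded/admissible-operator extensions the authors anticipate), whereas your proof leans on the bounded perturbation theorem and the identification $D(A+BF)=D(A)$; in the bounded setting of this paper, both are valid.
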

\begin{proof}
Suppose $\bvec{z}{v}\in \ol{\ran}\bvec{A^*}{B^*}$, then there exists a sequence $(w_n)_{n\in \N}\subset D(A^*)$ such that $\bvec{A^*}{B^*}w_n\to \bvec{z}{v}$ in $\calH\times \calU$. Set $z_n:=A^*w_n$ and $v_n:=B^*w_n$. 

For any $x_0\in \calH$ and $u\in L^2([0,T],\calU)$, let $x$ denote the solution of 
$$
\dot x=Ax+Bu,\quad x(0) = x_0\in \calH.
$$
We then compute that
\begin{align}\label{appen}
\begin{split}
&\int_0^T2\Re\<z_n,x(t)\>+2\Re\<v_n,u(t)\>dt =\int_0^T2\Re\<w_n,Ax(t)+Bu(t)\>_{\calH_{1}^d,\calH_{-1}}dt\\
&\qquad=2\Re\<w_n,x(T)-x_0\>.
\end{split}
\end{align}
Clearly, as $n\to\infty$, we have that
\begin{equation*}
\int_0^T2\Re\<z_n,x(t)\>+2\Re\<v_n,u(t)\>dt\to\int_0^T2\Re\<z,x(t)\>+2\Re\<v,u(t)\>dt.
\end{equation*}
So, putting together the two previous relations, we deduce that $2\Re\<w_n,x(T)-x_0\>$ will necessarily converge to some real number as $n\to\infty$.

We claim that $x(T)-x_0$ can actually attain any value in $\calH$. In fact, since $(A,B)$ is exponentially stabilizable, there exists some $K\in L(\calH,\calU)$ such that $A+BK$ generates an exponentially stable semigroup $\calT^{cl}$. So, for sufficiently large $T>0$, $\ran(\calT^{cl}_T-I)=\calH$, since $\calT^{cl}_T-I$ is a invertible operator whose inverse is given by the Neumann series $-\sum_{i=0}^{\infty}(\calT^{cl}_T)^i$. Then, for any $q\in\calH$, there exists $x_0\in \calH$ such that $x(T)-x_0=\calT^{cl}_Tx_0-x_0=q$, where $u$ is given by $u(t):=K\calT_t^{cl}x_0$. 

{We thus have proved that, for any $q\in\calH$, $2\Re\<q,w_n\>$ will converge to some real number as $n\to\infty$. Let us denote such value by $f(q)$.} The function $f$ is linear (over the field $\R$) and real. In order to apply the Riesz's representation theorem, we need to ensure that $f$ is a bounded linear functional (over the field $\R$). It turns out that the boundedness of $f$ is also a consequence of the exponential stabilizability of $(A,B)$. Indeed, fix some $T>0$ so that $\calT^{cl}_T-I$ is invertible. Then there exists some constants $M_1,M_2>0$ such that, for any $n\in\N$, we have
\begin{align*}
&2\Re\<w_n,\calT_T^{cl}x_0-x_0\> { = \int_0^T2\Re\<w_n,(A+BK)\calT_t^{cl}x_0\>_{\calH_{1}^d,\calH_{-1}}dt}\\
&\qquad=\int_0^T 2\Re\<z_n,\calT_t^{cl}x_0\>+2\Re\<v_n,K\calT_t^{cl}x_0\>dt\leq\int_0^TM_1\|\calT_t^{cl}x_0\|dt\\
&\qquad\leq M_2\|x_0\| \leq M_2\|(\calT_T^{cl}-I)^{-1}\|\|\calT_T^{cl}x_0-x_0\|.
\end{align*}
Since $\ran(\calT^{cl}_T-I)=\calH$, we have
$$
f(q)=\lim_{n\to\infty}2\Re\<w_n,q\>\leq C_2\|(\calT_T^{cl}-I)^{-1}\|\|q\|\,,\quad \forall q\in\calH\; .
$$
So $f$ is bounded. By Theorem \ref{Riesz} in Appendix \ref{appenA}, we know that $f(q)=2\Re\<q,w\>$ for some $w\in\calH$. By noticing that $\lim_{n\to\infty}2\Re\<w_n,q\>=2\Re\<w,q\>$ and $\Re\<w,iq\>=\Im\<w,q\>$, we know that $\lim_{n\to\infty}\<w_n,q\>=\<w,q\>$ for any $q\in\calH$. So we have that $w_n\to w$ weakly in $\calH$. The above equality also implies that $\lim_{n\to\infty}\<A^*w_n,q\>=\<w,Aq\>=\<A^*w,q\>_{\calH_{-1}^d,\calH_{1}}$ for any $q\in D(A)$, and thus $A^*w_n\to A^*w$ weakly in $\calH_{-1}^{d}$ as $n\to \infty$. 
By the definition of $(w_n)_{n\in\N}$, we have that $A^*w_n\to z$ (strongly) as $n\to \infty$ in $\calH$, thus also in $\calH_{-1}^{d}$. So, $z=A^*w\in\calH$ and $w\in D(A^*)$. We now prove that $B^*w=v$. As mentioned earlier, $w_n\to w$ weakly in $\calH$, so $B^*w_n\to B^*w$ weakly in $\calU$. By definition, we have that $B^*w_n\to v$ (strongly) in $\calU$, thus we deduce that $B^*w=v$ and $\bvec{z}{v}\in \ran\bvec{A^*}{B^*}$. So, $\ran\bvec{A^*}{B^*}$ is closed. {By the closed graph theorem, $\ran[A\;B]$ is closed.}
\end{proof}

\subsection{Properties of the evolution operator}\label{twoestimate}

In this subsection, we will make some simplifications to OCP \eqref{OCP} and prove two crucial estimates. From now on, we assume that the pair $(A,C)$ is exponentially detectable and the pair $(A,B)$ is exponentially stabilizable.

Without loss of generality, we may also assume that $K=I$. Otherwise, we may define a new inner product $\<\cdot,\cdot\>_{\text{new}}$ on $\calU$ by
$$
\<u_1,u_2\>_{\text{new}}=\<(K^*K)^{\frac{1}{2}}u_1,(K^*K)^{\frac{1}{2}}u_2\>,\,\,\forall u_1,u_2\in \calU.
$$
We now endow $\calU$ with the new inner product $\<\cdot,\cdot\>_{\text{new}}$. Since the norm induced by this inner product is equivalent to the standard norm in $\calU$, we still have that $B\in L(\calU,\calH)$. The running cost $\ell$ shall now be recast as
\begin{equation*}
\ell(x,u) := \|Cx\|^2 + \|u\|_{\text{new}}^2 + 2\Re\<z,x\> + 2\Re\<(K^*K)^{-1}v,u\>_{\text{new}}
\end{equation*}
where $\|\cdot\|_{\text{new}}$ is the norm on $\calU$ induced by $\<\cdot,\cdot\>_{\text{new}}$.

By Lemma \ref{exanduniofossp}, there exists a unique minimizer $(x_e,u_e)$ of the optimal steady state problem \eqref{OSSP}. By \eqref{oss} and Lemma \ref{test}, we have that
$$
\bvec{z+C^*Cx_e}{v+u_e}\in \ker[A\;B]^{\perp}=\ol{\ran}\bvec{A^*}{B^*}=\ran\bvec{A^*}{B^*}.
$$
So, there exists some $w\in D(A^*)$ such that
\begin{align}\label{charw}
\bvec{A^*w}{B^*w}=\bvec{z+C^*Cx_e}{v+u_e}.
\end{align}

\begin{rem}\label{adjointexist}
In the works based on the study of the hyperbolicity of the optimality system, e.g., \cite{Zua,TreZua,TreZuaFIN}, the vector $w$ actually corresponds to the optimal adjoint steady state. In our setting, however, we have to use different techniques to ensure the existence of $w$. In particular, unlike \cite{TreZua, TreZuaFIN}, the existence of a solution to the optimal steady state problem does not imply the existence of such a vector $w$ (we provide a counterexample in Appendix \ref{appenB}). Instead, the existence of $w$ is now guaranteed by the closed range condition of Lemma~\ref{test}. Besides, in \cite{TreZuaFIN,TreZua} the vector $w$ is understood as the Lagrange multiplier (see Appendix \ref{appenB}), it is characterized by
\begin{align*}
\bvec{A^*w}{BB^*w}=\bvec{z+C^*Cx_e}{Bv+BK^*Ku_e}.
\end{align*}
Compared to \eqref{charw}, in our setting we do not have $B$ applied to the second line. This is essential for the {identity~\eqref{cov}, which transforms the linear terms of the running cost $\ell$ into a terminal cost $2\Re\<w,x(T)\>$, as introduced below.}
\end{rem}

At this moment, we assume that the optimal steady state $(x_e,u_e)=(0,0)$. Let $P$ be the solution of the differential Riccati equation \eqref{Riccati} with $P(0)=0$, then we have
\begin{align}\label{Pxx}
\begin{split}
&\frac{d\<P(T-t)x(t),x(t)\>}{dt}=-\<P^{'}(T-t)x(t),x(t)\>\\
&\qquad+2\Re\<P(T-t)x(t),Ax(t)\>+2\Re\<B^*P(T-t)x(t),u(t)\>.
\end{split}
\end{align}
We now apply a change of variable $\wt{u}(t):=u(t)+BB^*P(T-t)x(t)$. By combining~\eqref{DRE} and \eqref{Pxx}, we get the identity
\begin{align}\label{cov}
\begin{split}
\int_0^T&\ell(x(t),u(t))dt=\int_0^T\|Cx(t)\|^2+\|u(t)\|^2+2\Re\<z,x(t)\>+2\Re\<v,u(t)\>dt\\
&=\int_0^T2\Re\<B^*P(T-t)x(t),u(t)\>+\|B^*P(T-t)x(t)\|^2+\|u(t)\|^2\\
&\qquad\qquad-\frac{d\<P(T-t)x(t),x(t)\>}{dt}+2\Re\<w,Ax(t)+Bu(t)\>dt\\
&=\int_0^T\|\wt{u}(t)\|^2dt+\<P(T)x_0,x_0\>+2\Re\<w,x(T)\>-2\Re\<w,x_0\>.
\end{split}
\end{align}
We notice that $x$ and $\wt{u}$ are related by the following relation
\begin{equation}\label{refo}
\dot x(t)=(A-BB^*P(T-t))x(t)+B\wt{u}(t).
\end{equation}
Since $P$ is a continuous function with respect to the strong topology, by Proposition~\ref{evolution} we deduce that problem \eqref{refo} is well posed. 
Now, for any $\tau\in [0,T)$, consider the evolution problem
\begin{equation}\label{Xeq}
\dot x(t)=(A-BB^*P(T-t))x(t),\quad x(\tau)=x_0\in \calH,\quad t\in [\tau,T]\; ,
\end{equation}
and its corresponding evolution operator $U_T(t,\tau):\calH\to\calH$, that is, $U_T(t,\tau)$ is defined by
$$
U_T(t,\tau)x_0:=x(t)
$$
with $x(t)$ being the solution of \eqref{Xeq} at $t\in [\tau, T]$. Then, for each $x_0\in\calH$, $U_T(t,\tau)x_0$ satisfies
$$
\frac{dU_T(t,\tau)x_0}{dt}=(A-BB^*P(T-t))U_T(t,\tau)x_0,\quad U_T(\tau,\tau)x_0=x_0.
$$
Next, we prove two crucial properties of the evolution operator $U_T$.

\begin{lem}\label{exprate} (Exponential convergence of $U_T$) There exist some positive constants $M,k$ such that
\begin{equation}\label{expU_T}
\|U_T(t,\tau)\|\leq Me^{-k(t-\tau)}
\end{equation}
holds for all possible values of $\tau, t, T$ with $0\leq\tau\leq t\leq T$.
\end{lem}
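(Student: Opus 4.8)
The plan is to regard the time-dependent generator in \eqref{Xeq} as a bounded, exponentially decaying perturbation of the exponentially stable closed-loop semigroup generated by $A-BB^*P_{\min}$, and then to absorb the perturbation through a Grönwall estimate whose constants are uniform in $T$. Let me denote by $(S_t)_{t\ge0}$ the semigroup generated by $A_{\min}:=A-BB^*P_{\min}$; as recalled in Section \ref{Probsetting}, this semigroup is exponentially stable, so there are constants $M_0,\omega_0>0$ with $\|S_t\|\le M_0 e^{-\omega_0 t}$. Writing the generator in \eqref{Xeq} as $A-BB^*P(T-t)=A_{\min}+D_T(t)$, with the bounded perturbation $D_T(t):=BB^*\big(P_{\min}-P(T-t)\big)\in C_s([0,T],L(\calH))$, Proposition \ref{evolution} guarantees that $U_T$ is well defined and, by the standard bounded-perturbation (Duhamel) identity, satisfies
\[
U_T(t,\tau)=S_{t-\tau}+\int_\tau^t S_{t-s}\,D_T(s)\,U_T(s,\tau)\,ds .
\]

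Next I would estimate the perturbation using the exponential convergence established in Lemma \ref{expcon}: there are $M_1,\beta>0$ with $\|P_{\min}-P(r)\|\le M_1 e^{-\beta r}$ for all $r\ge0$, whence $\|D_T(s)\|\le \|B\|^2 M_1 e^{-\beta(T-s)}=:C e^{-\beta(T-s)}$. Taking norms in the Duhamel identity and setting $\phi(t):=e^{\omega_0(t-\tau)}\|U_T(t,\tau)\|$, the factors $e^{-\omega_0(t-s)}$ telescope against $e^{\omega_0(t-\tau)}$ and leave
\[
\phi(t)\le M_0+\int_\tau^t M_0 C\, e^{-\beta(T-s)}\,\phi(s)\,ds ,
\]
so the integral form of Grönwall's inequality yields $\phi(t)\le M_0\exp\!\big(M_0 C\int_\tau^t e^{-\beta(T-s)}\,ds\big)$.

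The crucial point — and the place where exponential, rather than merely strong, convergence of $P(T-\cdot)$ to $P_{\min}$ is indispensable — is that
\[
\int_\tau^t e^{-\beta(T-s)}\,ds=\frac{1}{\beta}\Big(e^{-\beta(T-t)}-e^{-\beta(T-\tau)}\Big)\le \frac{1}{\beta}
\]
uniformly over all $0\le\tau\le t\le T$ and over every $T>0$. Consequently $\phi(t)\le M_0\exp(M_0 C/\beta)=:M$, a constant independent of $\tau,t,T$, and unwinding the definition of $\phi$ gives $\|U_T(t,\tau)\|\le M e^{-\omega_0(t-\tau)}$, which is exactly \eqref{expU_T} with $k=\omega_0$.

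I expect the main obstacle to be securing uniformity in $T$ rather than the computation itself: a crude perturbation bound would produce a constant that grows with the length of the interval $[\tau,t]$ and hence with $T$, which would be useless for the subsequent turnpike estimates. It is precisely the exponential decay of $\|P_{\min}-P(T-s)\|$ from Lemma \ref{expcon} that renders $\int_\tau^t\|D_T(s)\|\,ds$ bounded by a $T$-independent constant. The remaining technical care lies in justifying the Duhamel identity within the present mild-solution/$\calH_{-1}$ framework and the strong measurability of $s\mapsto S_{t-s}D_T(s)U_T(s,\tau)x_0$, both of which follow from $D_T\in C_s([0,T],L(\calH))$ together with Proposition \ref{evolution}.
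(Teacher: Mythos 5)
Your proof is correct, and it reaches the conclusion by a genuinely more streamlined route than the paper's. Both arguments share the same skeleton: write $A-BB^*P(T-t)$ as a perturbation of the exponentially stable generator $A-BB^*P_{\min}$, control the perturbation by the exponential convergence $\|P_{\min}-P(r)\|\leq M_1e^{-\beta r}$ (this is Lemma~\ref{expcon}, which you cite correctly; note the paper's proof nominally invokes Proposition~\ref{findimP} at this point, though it is Lemma~\ref{expcon} that supplies the bound in infinite dimension), and then apply a Duhamel identity plus Gr\"onwall. The difference is in the Gr\"onwall step. The paper replaces the kernel $e^{-\mu(T-s)}$ by the cruder constant $e^{-\mu(T-t)}$ before integrating, which destroys the uniform-in-$T$ bound and forces a three-case analysis around a threshold $S$ (with $M_2e^{-\mu S}<\lambda$), including the factorization $U_T(t,\tau)=U_S(S-T+t,0)\,U_{T-\tau}(T-S-\tau,0)$ to patch the cases together; it also needs the auxiliary shift by $\lambda I$ with $\lambda$ strictly inside the stability margin, so the final decay rate is reduced. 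You instead keep the $s$-dependent coefficient inside the integral and use the Gr\"onwall--Bellman inequality with nonconstant coefficient, observing that
\begin{equation*}
\int_\tau^t e^{-\beta(T-s)}\,ds=\frac{1}{\beta}\Bigl(e^{-\beta(T-t)}-e^{-\beta(T-\tau)}\Bigr)\leq\frac{1}{\beta}
\end{equation*}
uniformly over $0\leq\tau\leq t\leq T$ and over all $T$. This single observation collapses the entire case analysis, and it yields the full closed-loop rate $k=\omega_0$ with the explicit constant $M=M_0e^{M_0C/\beta}$. What the paper's version buys in exchange is that it only ever uses the constant-coefficient Gr\"onwall inequality; what yours buys is brevity, sharper constants, and no evolution-operator surgery.

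One small presentational point: rather than ``taking norms in the Duhamel identity'' at the level of operators, it is cleaner to fix a unit vector $x_0$ and run the identity and Gr\"onwall on $\phi_{x_0}(t):=e^{\omega_0(t-\tau)}\|U_T(t,\tau)x_0\|$, which is continuous by Proposition~\ref{evolution}, and then take the supremum over $x_0$ at the end; this sidesteps any question about continuity or measurability of $t\mapsto\|U_T(t,\tau)\|$ in operator norm (the paper's proof is likewise pointwise in $x_0$). Since you bound $\|D_T(s)\|$ by the continuous majorant $Ce^{-\beta(T-s)}$ before integrating, no further measurability issue arises.
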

\begin{proof}
Suppose $\tau, t, T\in\R$ satisfy $0\leq\tau\leq t\leq T$. By Proposition \ref{findimP}, there exist $M,\mu>0$ such that $\|P(t)-P_{\min}\|\leq Me^{-\mu t}$. 
For any $x_0\in\calH$, let $x$ be the solution to \eqref{Xeq} and let $y(t):=e^{\lambda (t-\tau)}x(t)$
with some sufficiently small $\lambda>0$ so that $A-BB^*P_{\min}+\lambda I$ still generates an exponentially stable semigroup, denoted $\calT^{cl}$. A straightforward computation leads to
\begin{align}\label{estony}
\begin{split}
\dot y(t)&=(A-BB^*P_{\min}+\lambda I)y(t)+(BB^*P_{\min}-BB^*P(T-t))y(t)\\
y(t)&=\calT^{cl}_{t-\tau}y(\tau)+\int_{\tau}^t \calT^{cl}_{t-s}(BB^*P_{\min}-BB^*P(T-s))y(s)ds
\end{split}
\end{align}
for all $t\in[\tau,T]$. So, there exist constants $M_1$ and $M_2>0$ such that,
\begin{align}\label{Gronmat}
\begin{split}
\|y(t)\|&\leq M_1\|y(\tau)\|+\int_{\tau}^t M_2e^{-\mu (T-s)}\|y(s)\|ds\\
&\leq M_1\|x_0\|+M_2e^{-\mu (T-t)}\int_{\tau}^t \|y(s)\|ds.\\
\end{split}
\end{align}
Now fix some constant $S>0$ such that $M_2e^{-\mu S}< \lambda$.

We first discuss the case $\tau\geq T-S$. Referring to \eqref{Gronmat}, we can further obtain
\begin{equation}\label{Gronmat2}
\|y(t)\|\leq M_1\|x_0\|+M_2\int_{\tau}^t \|y(s)\|ds.
\end{equation}
Since $t-\tau\leq S$, applying Grönwall's inequality to \eqref{Gronmat2} we get
\begin{equation}\label{unicase2}
\|y(t)\|\leq M_1\|x_0\|e^{M_2(t-\tau)}\leq M_1e^{M_2S}\|x_0\|.
\end{equation}
So, we deduce that
\begin{equation}\label{form1}
\|U_T(t,\tau)x_0\|=\|x(t)\|\leq \|y(t)\|\leq M_1e^{(M_2+1)S}e^{-(t-\tau)}\|x_0\|,
\end{equation}
which proves condition~\eqref{expU_T} with suitable coefficients. 

We now consider the case $\tau < T - S$. If $t\in [\tau,T-S]$, from~\eqref{Gronmat} we can further obtain
\begin{equation}\label{Gronmat3}
\|y(t)\|\leq M_1\|x_0\|+M_2e^{-\mu S}\int_{\tau}^t \|y(s)\|ds.
\end{equation}
Applying Grönwall's inequality to \eqref{Gronmat3}, we have
\begin{align*}
\|y(t)\|\leq M_1\|x_0\|&e^{M_2e^{-\mu S}(t-\tau)}.
\end{align*}
We thus get that
\begin{align}\label{form2}
\|x(t)\|=e^{-\lambda (t-\tau)}\|y(t)\|\leq M_1\|x_0\|e^{(M_2e^{-\mu S}-\lambda)(t-\tau)}.
\end{align}
Since $M_2e^{-\mu S}-\lambda<0$, we conclude that~\eqref{expU_T} is satisfied with suitable coefficients also in this case. It remains to prove~\eqref{expU_T} in the case $\tau < T - S$ and $t\in (T-S,T]$. From the definition of $U_T$, we know that
\begin{align}\label{decomposition}
U_T(t,\tau)=U_{T-\tau}(t-\tau,0)=U_{S}(S-T+t,0)U_{T-\tau}(T-S-\tau,0).
\end{align}
Notice that $U_{S}(S-T+t,0)$ satisfies the estimate \eqref{form1}, so we have
\begin{equation}\label{Gron4}
\|U_{S}(S-T+t,0)\|\leq M_1e^{(M_2+1)S}e^{-(S-T+t)}\|x_0\|\leq M_1e^{(M_2+1)S}\|x_0\|.
\end{equation}
It is also clear that $U_{T-\tau}(T-S-\tau,0)$ satisfies the estimate \eqref{form2}, so we have
\begin{align}\label{Gron5}
\|U_{T-\tau}(T-S-\tau,0)\|\leq M_1e^{(M_2e^{-\mu S}-\lambda)(T-S-\tau)}\|x_0\|\leq M_1e^{(M_2e^{-\mu S}-\lambda)(t-S-\tau)}\|x_0\|.
\end{align}
Combining \eqref{decomposition}-\eqref{Gron5}, we deduce that~\eqref{expU_T} is satisfied also in this case with suitable coefficients $M$ and $k$.

By choosing the largest $M$ and the smallest $k$ among all the 3 cases, we conclude that~\eqref{expU_T} holds for all possible values of $\tau, t, T$ such that $0\leq\tau\leq t\leq T$.
\end{proof}

\begin{lem}\label{unibound}
The operators $\Phi^T_t: L^2([0,t],\mathcal{U})\sra \calH$ defined by 
$$
\Phi^T_t(\wt{u}):=\int_0^tU_T(t,s)B\wt{u}(s)ds
$$ 
are uniformly bounded in norm for all possible values of $t,T$ with $0\leq t\leq T$.
\end{lem}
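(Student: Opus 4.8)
The plan is to estimate $\|\Phi^T_t(\wt u)\|$ directly, using the uniform exponential decay of the evolution operator established in Lemma~\ref{exprate}. First I would bound the Bochner integral by the triangle inequality together with the boundedness of $B\in L(\calU,\calH)$, obtaining
$$
\|\Phi^T_t(\wt u)\|\le \int_0^t \|U_T(t,s)\|\,\|B\|\,\|\wt u(s)\|\,ds.
$$
The crucial input is the estimate $\|U_T(t,s)\|\le Me^{-k(t-s)}$ furnished by Lemma~\ref{exprate}, whose constants $M,k$ are independent of $t,s,T$; this uniformity is precisely what will make the final bound uniform.

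Next I would apply the Cauchy--Schwarz inequality in time to split the integral as a product of $\bigl(\int_0^t e^{-2k(t-s)}\,ds\bigr)^{1/2}$ and $\|\wt u\|_{L^2([0,t],\calU)}$. The first factor is bounded by $(2k)^{-1/2}$ uniformly in $t$, since $\int_0^t e^{-2k(t-s)}\,ds = (1-e^{-2kt})/(2k)\le 1/(2k)$. Combining these two observations yields
$$
\|\Phi^T_t(\wt u)\|\le \frac{M\|B\|}{\sqrt{2k}}\,\|\wt u\|_{L^2([0,t],\calU)},
$$
so that $\|\Phi^T_t\|\le M\|B\|/\sqrt{2k}$, a constant independent of $t$ and $T$.

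The only point requiring a little care --- and the closest thing to an obstacle --- is to confirm that the bound genuinely carries no hidden dependence on $t$ or $T$. This is handled by the fact that the decay constants already arrive uniformly from Lemma~\ref{exprate}, while the convolution kernel $e^{-k(t-s)}$ integrates to a quantity bounded uniformly in the upper limit $t$. Beyond this bookkeeping there is no substantial analytic difficulty: the entire content of the lemma is the transfer of the uniform exponential stability of $U_T$ into a uniform input-to-state estimate for the operators $\Phi^T_t$.
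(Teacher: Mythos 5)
Your proposal is correct and follows essentially the same argument as the paper: bound the integrand using Lemma~\ref{exprate} and $\|B\|$, apply the Cauchy--Schwarz (H\"older) inequality in time, and bound $\int_0^t e^{-2k(t-s)}\,ds$ by $1/(2k)$ uniformly, yielding the same constant $M\|B\|/\sqrt{2k}$. Nothing is missing.
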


\begin{proof}
By Lemma \ref{exprate} and Hölder's inequality, we have that
\begin{align*}
\|\Phi^T_t(\wt{u})\|&\leq\int_0^t\left\|U_T(t,s)\right\|\left\|B\wt{u}(s)\right\|ds\\
&\leq \left(\int_0^t M^2e^{-2k(t-s)} ds\right)^{\frac{1}{2}}\left(\int_0^t \|B\|^2\|\wt{u}(s)\|^2 ds\right)^{\frac{1}{2}}\\
&\leq M\|B\|\left(\frac{1-e^{-2kt}}{2k}\right)^{\frac{1}{2}}\|\wt{u}\| \leq M\|B\|\left(\frac{1}{2k}\right)^{\frac{1}{2}}\|\wt{u}\|
\end{align*}
where $M$ and $k$ are the coefficients given in Lemma \ref{exprate}.
So, we conclude that $\Phi^T_t$ are uniformly bounded in norm for all possible values of $t$ and $T$ with $0\leq t\leq T$.
\end{proof}

\subsection{Explicit solutions of the optimal control}
In this subsection, we give an explicit representation of the optimal control.
\begin{lem}\label{explicitoc} Let $\hat{U}_T(t,\tau)$ be defined as the evolution operator of the problem
\begin{align}\label{optconsy}
\dot z(t)=(A^*-P(t)BB^*)z(t)\,,\quad z(\tau)=x_0\in \calH\,, \quad t\in[\tau,T]
\end{align}
where $P$ is the solution of \eqref{Riccati} with $P(0)=0$. Then the optimal control of problem \eqref{OCP} is given in a feedback form by
\begin{align}\label{feedbacku}
u^*(t)=-(K^*K)^{-1}B^*P(T-t)(x^*(t)-x_e)-(K^*K)^{-1}B^*\hat{U}_{T}(T-t,0)w+u_e\, ,
\end{align}
where $w$ is the vector given in \eqref{charw}.
\end{lem}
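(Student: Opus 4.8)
The plan is to exploit the completed-square identity \eqref{cov}, which already isolates the quadratic dependence of the cost on the shifted control, and to reduce the optimization to an elementary $L^2$ minimization whose minimizer can be read off by inspection. Throughout I work under the normalizations already in force in this subsection, namely $(x_e,u_e)=(0,0)$ and $K=I$, so that the feedback term reads $B^*P(T-t)x(t)$ and the change of variable is $\tilde u(t)=u(t)+B^*P(T-t)x(t)$; the general formula \eqref{feedbacku} is recovered at the end by undoing these reductions (restoring $(K^*K)^{-1}$ on the control and translating $x^*$ by $x_e$ and $u^*$ by $u_e$, as licensed by Lemma \ref{changestate}).

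First I observe that $u\mapsto\tilde u$ is a bijection of $L^2([0,T],\calU)$ for fixed $x_0$: given $\tilde u$, the state is the unique solution of \eqref{refo} (well posed by Proposition \ref{evolution}, since $P(T-\cdot)\in C_s([0,T],L(\calH))$), and then $u(t)=\tilde u(t)-B^*P(T-t)x(t)$. Hence minimizing the cost over $u$ is equivalent to minimizing the right-hand side of \eqref{cov} over $\tilde u$. Discarding the terms $\langle P(T)x_0,x_0\rangle-2\Re\langle w,x_0\rangle$, which do not depend on $\tilde u$, the problem becomes
\begin{align*}
\min_{\tilde u\in L^2([0,T],\calU)}\ \int_0^T\|\tilde u(t)\|^2\,dt+2\Re\langle w,x(T)\rangle,\qquad x(T)=U_T(T,0)x_0+\int_0^TU_T(T,s)B\tilde u(s)\,ds,
\end{align*}
where the representation of $x(T)$ is the variation-of-constants formula for \eqref{refo} in terms of $U_T$ and the convolution of Lemma \ref{unibound}.

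Next I would move the adjoint onto $w$: since $2\Re\langle w,\int_0^TU_T(T,s)B\tilde u(s)\,ds\rangle=2\Re\int_0^T\langle B^*U_T(T,s)^*w,\tilde u(s)\rangle\,ds$ and $2\Re\langle w,U_T(T,0)x_0\rangle$ is again a constant, the functional is the purely quadratic
\begin{align*}
\int_0^T\Big(\|\tilde u(t)\|^2+2\Re\langle B^*U_T(T,t)^*w,\tilde u(t)\rangle\Big)\,dt=\int_0^T\|\tilde u(t)+B^*U_T(T,t)^*w\|^2\,dt-\int_0^T\|B^*U_T(T,t)^*w\|^2\,dt,
\end{align*}
whose unique minimizer is evidently $\tilde u^*(t)=-B^*U_T(T,t)^*w$. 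The remaining task, and the main obstacle, is to identify $U_T(T,t)^*w$ with $\hat U_T(T-t,0)w$. I would establish this by showing that both sides, as functions of $t$, solve the same backward linear equation with the same terminal value. Differentiating $U_T(T,\cdot)$ in its second argument gives $\partial_t U_T(T,t)=-U_T(T,t)(A-BB^*P(T-t))$, so $g(t):=U_T(T,t)^*w$ satisfies $\dot g=-(A^*-P(T-t)BB^*)g$ with $g(T)=w$, using that $P$ and $BB^*$ are self-adjoint; on the other hand $h(t):=\hat U_T(T-t,0)w$ satisfies, directly from the defining equation \eqref{optconsy} of $\hat U_T$, the same equation $\dot h=-(A^*-P(T-t)BB^*)h$ with $h(T)=w$. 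By uniqueness for this evolution problem, $g\equiv h$, whence $\tilde u^*(t)=-B^*\hat U_T(T-t,0)w$.

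Finally, unwinding the change of variable gives $u^*(t)=\tilde u^*(t)-B^*P(T-t)x^*(t)=-B^*P(T-t)x^*(t)-B^*\hat U_T(T-t,0)w$, and restoring the normalizations yields exactly \eqref{feedbacku}. I expect the delicate points to be purely those of the infinite-dimensional, time-dependent setting: justifying the differentiation of $U_T(T,\cdot)$ and the passage to adjoints in the correct extrapolation space, where $A^*$ is understood as an operator into $\calH_{-1}^d$, and confirming that \eqref{optconsy} is well posed. The latter is again a bounded perturbation of $A^*$ by $-P(t)BB^*$, so both well-posedness and the uniqueness needed in the identification $g\equiv h$ follow from the same theory (Proposition \ref{evolution}) used throughout.
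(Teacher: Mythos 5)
Your proposal is correct in substance and follows the same overall strategy as the paper: both arguments start from the completed-square identity \eqref{cov}, convert the remaining cross term $2\Re\langle w,x(T)\rangle$ into an integral against the shifted control $\wt{u}$, complete the square in $L^2([0,T],\calU)$, read off the minimizer, and invoke Proposition \ref{evolution} for closed-loop well-posedness before undoing the normalizations via Lemma \ref{changestate}. The difference is entirely in how the cross term is handled. You write $x(T)$ by variation of constants, pull the adjoint onto $w$, and then identify $U_T(T,t)^*w$ with $\hat U_T(T-t,0)w$ through a backward-uniqueness argument. The paper never introduces $U_T(T,t)^*$: it differentiates $t\mapsto\langle x(t),\hat U_T(T-t,0)w\rangle$ directly (its identity \eqref{expest2}), exploiting that $w\in D(A^*)$ (supplied by the closed range test) and $P\in C_s^1$ (strict solution of \eqref{Riccati}, Remark \ref{whystrictsolution}) make $\hat U_T(\cdot,0)w$ a \emph{strict} solution lying in $C([0,T],D(A^*))\cap C^1([0,T],\calH)$, so the product rule against the merely mild trajectory $x$ is justified through the pairing $\langle\cdot,\cdot\rangle_{\calH_{-1},\calH_{1}^d}$.

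This bears on the one step where your justification is too quick: the identity $\partial_t U_T(T,t)x=-U_T(T,t)\bigl(A-BB^*P(T-t)\bigr)x$ (differentiability of the evolution family in its \emph{second} argument) does not follow from Proposition \ref{evolution}, which only gives well-posedness of the forward inhomogeneous problem; it is a separate, standard fact of evolution-family theory, valid for $x\in D(A)$, and to place $g(t)=U_T(T,t)^*w$ in the uniqueness class of Proposition \ref{evolution} you additionally need strong continuity of the adjoint family. These facts are true and provable, so there is no fatal gap, but within the paper's toolkit the cleanest repair of your identification is precisely the paper's computation: differentiate $s\mapsto\langle U_T(s,t)x,\hat U_T(T-s,0)w\rangle$ and use the strict regularity of the second factor to conclude $\langle U_T(T,t)x,w\rangle=\langle x,\hat U_T(T-t,0)w\rangle$ for all $x\in\calH$, without ever differentiating $U_T$ in its second argument. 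In short: same decomposition and same key objects; your route makes the duality (the feedback kernel is the adjoint flow) conceptually explicit, while the paper's route is engineered so that all regularity needed is exactly what $w\in D(A^*)$ provides --- which is the very reason the closed range test was developed.
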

\begin{proof}
We first assume the optimal steady state $(x_e,u_e)=(0,0)$ and $K=I$. By Remark \ref{whystrictsolution} and Proposition~\ref{evolution}, since $w\in D(A^*)$, we have
$$
\hat{U}_T(\cdot,0)w\in C([0,T],D(A^*))\cap C^1([0,T],\calH).
$$
From~\eqref{optconsy} we deduce that
\begin{align*}
\frac{d\hat{U}_T(T-t,0)w}{dt}=-(A^*-P(T-t)BB^*){U}_T(T-t,0)w.
\end{align*}
Let $x$ be any solution of problem \eqref{sy} with initial condition $x_0\in\calH$, then
\begin{align}\label{expest2}
\begin{split}
&2\Re\<x(T),w\>-2\Re\left\<x_0,\hat{U}_{T}(T,0)w\right\>\\
&\quad= 2\int_0^T\frac{d\Re\left\<x(t),\hat{U}_{T}(T-t,0)w\right\>}{dt}dt\\
&\quad=\int_0^T2\Re\left\<Ax(t)+Bu(t),\hat{U}_{T}(T-t,0)w\right\>_{H_{-1},H_{1}^d}\\
&\qquad\qquad+2\Re\left\<x(t),-(A^*-P(T-t)BB^*)\hat{U}_{T}(T-t,0)w\right\>dt\\
&\quad=\int_0^T2\Re\left\<u(t),B^*\hat{U}_{T}(T-t,0)w\right\>\\
&\qquad\qquad+2\Re\left\<B^*P(T-t)x(t),B^*\hat{U}_{T}(T-t,0)w\right\>dt.
\end{split}
\end{align}
Combining \eqref{cov} and \eqref{expest2}, we get
\begin{align*}
&\int_0^T\ell(x(t),u(t))\,dt\\
&\quad=\int_0^{T} \|u(t)+B^*P(T-t)x(t)\|^2dt+\<P(T)x_0,x_0\>+2\Re\<w,x(T)-x_0\>\\
&\quad=\int_0^{T} \|u(t)+B^*P(T-t)x(t)\|^2+2\Re\left\<u(t),B^*\hat{U}_{T}(T-t,0)w\right\>\\
&\qquad\qquad+2\Re\left\<B^*P(T-t)x(t),B^*\hat{U}_{T}(T-t,0)w\right\>dt+M_0\\
&\quad=\int_0^{T}\|u(t)+B^*P(T-t)x(t)+B^*\hat{U}_{T}(T-t,0)w\|^2dt+M
\end{align*}
where $M_0,M\in \R$ are constants independent of $u$ given by
\begin{align*}
M_0&:=\<P(T)x_0,x_0\>-2\Re\<w,x_0\>+2\Re\left\<x_0,\hat{U}_{T}(T,0)w\right\>,\\
M:&=M_0-\int_0^T\|B^*\hat{U}_{T}(T-t,0)w\|^2dt.
\end{align*}
This proves that, if \eqref{feedbacku} can give a solution to problem \eqref{sy} in mild sense, then this solution is optimal. 
Moreover, the closed loop trajectory (if it exists) given by the feedback law \eqref{feedbacku} must satisfy
$$
\dot{x}^*(t)=(A-BB^*P(T-t))x^*(t)-BB^*\hat{U}_T(T-t,0)w,\,x^*(0)=x_0\in\calH,\,t\in[0,T].
$$
By Proposition \ref{evolution}, this problem admits a unique solution $x^*\in L^2([0,T],\calH)$. Thus, \eqref{feedbacku} provides the optimal solution of \eqref{sy}.

For the general case, by Lemma~\ref{changestate}, the optimal optimal trajectory $x^*$ satisfies
$$
\frac{d(x^*-x_e)}{dt}(t)=(A-B(K^*K)^{-1}B^*P(T-t))(x^*(t)-x_e)-B(K^*K)^{-1}B^*\hat{U}_T(T-t,0)w.
$$
Simple calculation shows that the optimal control $u^*$ is given by
\begin{equation*}
u^*(t)=-(K^*K)^{-1}B^*P(T-t)(x^*(t)-x_e)-(K^*K)^{-1}B^*\hat{U}_{T}(T-t,0)w+u_e.
\end{equation*}  
\end{proof}

\subsection{ Proof of Theorem~\ref{maininf}}

In this subsection, we prove our main result.
\begin{proof}
Without loss of generality, we assume that the unique optimal steady state of problem~\eqref{OSSP} is at $(0,0)$ and $K=I$. 
{Fix $\calN$ to be some bounded neighborhood of $x_e=0$ and $T>0$.} Now let $(x^*,u^*)$ denote the optimal pair of problem \eqref{OCP} corresponding to the initial condition $x_0\in \calH$. {Let 
$$
\wt{u}^*(t):=u^*(t)+B^*P(T-t)x^*(t)=-B^*\hat{U}_{T}(T-t,0)w\,.
$$
By definition {of $\wt{u}^*$}, $\wt{u}^*$ is the optimal control of our modified system \eqref{refo}.} Recall from \eqref{cov} that
\begin{equation}\label{diff1}
\int_0^T\ell(x^*(t),u^*(t))\,dt=\int_0^{T}\|\wt{u}^*(t)\|^2dt+\<P(T)x_0,x_0\>+2\Re\<w,x^*(T)-x_0\>.
\end{equation}
{For an arbitrary $T_0\in [0,T]$, consider the modified input function defined by 
$$
\wt{u}(t):=
\left\{
\begin{alignedat}{2}
&0\qquad\qquad t\in[0,T_0)\\
&\wt{u}^*(t)\,\,\qquad t\in[T_0,T].
\end{alignedat}
\right.
$$
Denote the solution of \eqref{refo} corresponding to the modified input $\wt{u}(t)$ by $x(t)$, {then we can recover the input $u$ of the original system \eqref{sy} which generates the same trajectory $x$} by setting
$$
u(t):=
\left\{
\begin{alignedat}{2}
&-B^*P(T-t)x(t)\qquad\qquad\qquad\qquad\qquad t\in[0,T_0)\\
&-B^*P(T-t)x(t)-B^*\hat{U}_{T}(T-t,0)w\quad\; t\in[T_0,T].
\end{alignedat}
\right.
$$}
Now the cost functional is given by
\begin{equation}\label{diff2}
\int_0^T\ell(x(t),u(t))\,dt=\int_{T_0}^T\|\wt{u}^*(t)\|^2dt+\<P(T)x_0,x_0\>+2\Re\<w,x(T)-x_0\>\, ,
\end{equation}
where $x(T)=x^*(T)-U_T(T,T_0)\Phi^T_{T_0}(\wt{u}^*)$. 
In fact, by the standard properties of the evolution system \eqref{refo}, 
we have that
$$
x^*(T_0)-x(T_0)=\int_0^{T_0}U_T(T_0,s)B\wt{u}^*(s)ds=\Phi^T_{T_0}(\wt{u}^*)\,,
$$
and thus
$$
x^*(T)-x(T)=U_T(T,T_0)(x^*(T_0)-x(T_0))=U_T(T,T_0)\Phi^T_{T_0}(\wt{u}^*).
$$
We now consider the difference between \eqref{diff2} and \eqref{diff1}, and by Lemma \ref{exprate} and Lemma \ref{unibound}, we deduce the existence of some constant $M>0$ such that \begin{align}\label{wtubound}
\begin{split}
\|\wt{u}^{*}\|^2_{L^2([0,T_0],\mathcal{U})}:&=\int_0^{T_0}\|\wt{u}^*(t)\|^2dt \leq 2\Re\left\<w,-U_T(T,T_0)\Phi^T_{T_0}(\wt{u}^*)\right\>\\
&\leq Me^{-k(T-T_0)}\|\wt{u}^*\|_{L^2([0,T_0],\mathcal{U})}.
\end{split}
\end{align}
That is, $\|\wt{u}^{*}\|_{L^2([0,T_0],\mathcal{U})}\leq Me^{-k(T-T_0)}$. 
Since $x^*(T_0)=U_T(T_0,0)x^*(0)+\Phi^T_{T_0}(\wt{u}^*)$, we now have that, for some $M>0$, 
\begin{equation}\label{expxstar}
\|x^*(T_0)\|\leq M(e^{-kT_0}\|x_0\|+e^{-k(T-T_0)}).
\end{equation}
Recall from Lemma \ref{explicitoc} that $u^*(t)=-B^*P(T-t)x^*(t)-B^*\hat{U}_{T}(T-t,0)w$. We now prove that also~$u^*$ satisfies an estimate like \eqref{expxstar}. Indeed, since $P$ is continuous in strong topology and $P$ converges to $P_{\min}$ strongly as $t\to \infty$, for any $x\in \calH$ we have that
$$
\lim_{t\to0}P(t)x=0,\;\text{and}\;\lim_{t\to\infty}P(t)x=P_{\min}x.
$$
By the uniform boundedness principle, there exists some constant $M>0$ such that
\begin{equation}\label{lastest1}
\|P(t)\|<M,\; \forall t\in[0,\infty)\;.
\end{equation}

On the other hand, we claim that, for any possible values of $\tau,t,T$ with $0\leq\tau\leq t\leq T$, the operator $\hat{U}_T$ satisfies the estimate
\begin{align}\label{finest1}
\|\hat{U}_T(t,\tau)\|\leq Me^{-k(t-\tau)}
\end{align}
with some constants $M,k>0$. The proof will follow similar lines as the one of Lemma \ref{exprate}.

Recall that the evolution system corresponding to $\hat{U}_T(\cdot,\tau)$ is
\begin{align*}
\dot{z}(t)=(A^*-P(t)BB^*)z(t),\;z(\tau)=x_0,\;\tau\leq t\leq T.
\end{align*}
Let $y(t):=e^{\lambda (t-\tau)}z(t)$ with some sufficiently small $\lambda>0$ so that $A-BB^*P_{\min}+\lambda I$ generates an exponentially stable semigroup $\calT^{cl}$ with growth bound $-\beta<0$.

Then $A^*-P_{\min}BB^*+\lambda I$ generates the adjoint semigroup $(\calT^{cl})^*$ whose growth bound is also $-\beta$. Similar to \eqref{estony} and \eqref{Gronmat}, we have that
\begin{align}\label{hatUevo}
\begin{split}
&y(t)=(\calT^{cl}_{t-\tau})^*y(\tau)+\int_{\tau}^t(\calT^{cl}_{t-s})^*(P_{\min}-P(s))BB^*y(s)ds\\
&\|y(t)\|\leq M_1\|x_0\|+M_2e^{-\mu \tau}\int_{\tau}^t\|y(s)\|ds\;,
\end{split}
\end{align}
with some constants $M_1,M_2>0$. 
Now fix some $S>0$ such that $M_2e^{-\mu S}<\lambda$. In a similar manner as in the proof of Lemma \ref{exprate}, we can easily prove that if $\tau\geq S$, then \eqref{finest1} holds with some suitable coefficients $M,k$ independent of $T$.

For the case $\tau<S$ and $t\leq S$, observe from \eqref{hatUevo} that
\begin{align*}
\|y(t)\|\leq M_1\|x_0\|+M_2\int_\tau^t\|y(s)\|ds
\end{align*}
holds for some constants $M_1, M_2>0$. By Grönwall's inequality, we have
\begin{align}\label{secondcase}
\|\hat{U}_T(t,\tau)x_0\|\leq\|y(t)\|\leq M_1\|x_0\|e^{M_2(t-\tau)}\leq M_1\|x_0\|e^{-(t-\tau)}e^{(M_2+1)S}.
\end{align}
This implies that \eqref{finest1} holds with some suitable coefficients $M,k$.

Finally, if $t> S$, we have
\begin{align}\label{parstep2}
\begin{split}
\|\hat{U}_T(t,\tau)\|&=\|\hat{U}_{T}(t,S)\hat{U}_{T}(S,\tau)\|\\
&\leq Me^{-k(t-S)}\|\hat{U}_{T}(S,\tau)\| =Me^{-k(t-\tau)}e^{kS}\|\hat{U}_{T}(S,\tau)\|
\end{split}
\end{align}
with some $M,k>0$, since $\hat{U}_T(t,\tau)$ satisfies the estimate of the first case. By choosing $t=S$ in~\eqref{secondcase}, we get that
$$
\|\hat{U}_T(S,\tau)x_0\|\leq M_1\|x_0\|e^{M_2(S-\tau)}\leq M_1\|x_0\|e^{M_2S}.
$$
That is, $\hat{U}_T(S,\tau)$ is uniformly bounded in norm by some constant independent of $\tau$ and $T$. Combining this with \eqref{parstep2}, we prove the estimate \eqref{finest1} holds also for the case $\tau<S$ and $t>S$.

Finally, since $u^*(t)=-B^*P(T-t)x^*(t)-B^*\hat{U}_{T}(T-t,0)w$, combining \eqref{expxstar}-\eqref{finest1} gives
\begin{align}\label{expustar}
\|u^*(t)\|\leq M_1(e^{-kt}\|x_0\|+e^{-k(T-t)})+M_2e^{-k(T-t)}
\end{align}
for some sufficiently large constants $M_1,M_2>0$. Since $x_0\in\calN$ and $\calN$ is a bounded neighborhood of $x_e=0$, \eqref{expxstar} and \eqref{expustar} imply our main theorem.
\end{proof}

\section{Applications}\label{appli}
{In this section, we apply our main result to two examples.}

\begin{example} (The heat equation)
Let $\calH=L^2(0,1)$ endowed with the $L^2$ inner product. We define $A:D(A)\to \calH$ by
\begin{align*}
&D(A)=H^2(0,1)\cap H_0^1(0,1)\,,\\
&\text{and}\quad Ah=\frac{d^2 h}{d x^2},\quad\forall h\in D(A)\,,
\end{align*}
where
\begin{align*}
&H_0^1(0,1):=\Big\{f\in L^2[0,1]\;\Big|\;\frac{df}{dx}\in L^2[0,1],\ f(0)=f(1)=0\Big\},\\
&\text{and}\quad H^2(0,1):=\Big\{f\in L^2[0,1]\;\Big|\;\frac{df}{dx},\frac{d^2f}{dx^2}\in L^2[0,1]\Big\}.
\end{align*}
Then $A$ generates a $C_0$-semigroup $\calT$ on $\calH$. See, e.g., \cite[Example 2.6.8]{TusWei}.
Let  $\calU=\calY=\C$. We define $B:\calU\to \calH$ and $C:\calH\to \calY$ by
\begin{align*}
Bu=b&(x)u,\quad Ch(x)=\<h(x),c(x)\>,\\
\text{where}\quad &b(x)=\frac{1}{2\eps}\Id_{[x_0-\eps,x_0+\eps]}(x),\\
c(x)&=\frac{1}{2\upsilon}\Id_{[x_1-\upsilon,x_1+\upsilon]}(x)
\end{align*}
where $\Id_{S}$ is the characteristic function of a set $S$ and $v,\eps>0$ are sufficiently small constants. Clearly, $B$ and $C$ are bounded operators.

This functional framework is suitable to describe the following physical system: Consider a metal slab of length one with no heat flow at each end. Suppose we can act on the system by means of a point actuator around a point $x_0\in (0,1)$ to heat or cool this bar, and we can use a point sensor around a point $x_1\in(0,1)$ to measure the temperature. This model is described by the PDE
\begin{equation*}
\left\{
\begin{alignedat}{4}
&\frac{\ptl h}{\ptl t}(x,t)=\frac{\ptl^2 h}{\ptl x^2}(x,t)+\frac{1}{2\eps}\Id_{[x_0-\eps,x_0+\eps]}(x)u(t),\\
&h(x,0)=h_0(x),\\
&{h(0,t)=h(1,t)=0,}\\
&y(t)=\frac{1}{2\upsilon}\int_{x_1-\upsilon}^{x_1+\upsilon}h(x,t)dx.
\end{alignedat}
\right.
\end{equation*}

Consider the  problem of minimizing following running cost
\begin{equation*}
\ell(h,u):=\|Ch-y_0\|^2 + \|u\|^2
\end{equation*}
with $y_0\in\R$. That is, our goal is to constantly keep the temperature around $x_1$ close to $y_0$, but we can only influence the temperature of this slab through the part close to point $x_0$. 

By \cite[Lemma 2.4.7 and Theorem 8.1.6]{CurtainZwart}, $(A,B)$ is exponentially stabilizable and $(A,C)$ is exponentially detectable. We can thus apply Theorem \ref{maininf}, and conclude that the optimal control problem~\eqref{OCP} with running cost $\ell$ satisfies the exponential turnpike property at the optimal steady state given by Remark \ref{exanduniofossp}. Specifically, given a bounded set $\calN\in L^2(0,1)$, the optimal trajectory $h^*$ corresponding to initial condition $h_0\in \calN$ satisfies
$$
\|Ch^*(\cdot,t)-y_0\|\leq M(e^{-kt}+e^{-k(T-t)})
$$
for some constants $M,K>0$ for any $t\in [0,T]$. This implies that the average temperature on $[x_1-\upsilon,x_1+\upsilon]$ will be close to $y_0$ for most of the time. Similarly, the optimal control $u^*$ will stay close to a specific value for most of the time horizon. 
The setting of this example can be generalized to the multidimensional case as well, again relying on~\cite[Theorem 8.1.6]{CurtainZwart}.

\end{example}
\begin{example}(Elastic string)
Let $\calH=H_0^1(0,1)\times L^2[0,1]$, then $\calH$ is a Hilbert space with respect to the inner product
\begin{align}\label{innerproduct}
\left\<\bvec{f_1}{g_1},\bvec{f_2}{g_2}\right\>:&=\int_0^{1}\frac{df_1}{dx}(s)\ol{\frac{df_2}{dx}}(s)+g_1(s)\ol{g_2(s)}ds.\\
\end{align}
Let $A:D(A)\to\calH$ be defined on $D(A)=[H^2(0,1)\cap H_0^1(0,1)]\times H_0^1(0,1)$ by
\[A\bvec{f}{g}:=\bvec{g}{\frac{d^2f}{dx^2}},\quad\forall \bvec{f}{g}\in D(A)\; .
\]
Then $A$ generates a $C_0$-semigroup $\calT$ on $\calH$. It is well-known that this semigroup is a diagonalizable semigroup with pure imaginary eigenvalues. See \cite[Example~2.7.13]{TusWei} for reference.

Denote $\calU=L^2(0,1)$, take $\xi,\eta\in[0,1]$ with $\xi<\eta$ and consider the control operator $B\in L(\calU,\calH)$ defined by
\begin{align*}
Bu:=\bvec{0}{\Id_{[\xi,\eta]}u},\quad\forall u\in \calU.
\end{align*}
It is well-known that the pair $(A,B)$ defined above is exactly controllable, thus exponentially stabilizable. See \cite[Example 11.2.2]{TusWei}.

The previous functional framework is well-suited to describe the locally distributed control of a string equation with Dirichlet boundary conditions
\begin{equation*}
\left\{
\begin{alignedat}{4}
&\frac{\ptl^2 f}{\ptl t^2}(x,t)=\frac{\ptl^2 f}{\ptl x^2}(x,t)+\Id_{[\xi,\eta]}u(x,t),\\
&f(x,0)=f_0(x), \;\frac{df}{dt}(x,0)=g_0(x),\;\;\bvec{f_0}{g_0}\in D(A),\\
&f(0,t)=f(1,t)=0
\end{alignedat}
\right.
\end{equation*}
as a first-order linear control system on $\calH$ of the form~\eqref{sy}. 
We now define the observation operator $C:\calH\to L^2(0,1)$ by
\begin{align*}
C\bvec{f}{g}&:=g,\qquad\forall \bvec{f}{g}\in\calH
\end{align*}
and introduce the running cost $\ell$ by
\begin{align*}
\ell\left(\bvec{f}{g},u\right):=\left\|C\bvec{f}{g}\right\|^2&+\|u-u_0\|^2,\quad\forall \bvec{f}{g}\in\calH,
\end{align*}
for some possibly nonzero $u_0\in L^2(0,1)$. 
This choice of $\ell$ entails that we want to stabilize the string, while keeping the control as close as possible to the target value $u_0$.

In other words, given a time horizon $T>0$, we want to minimize the cost functional
\begin{align*}
J^T(f,u):= \int_0^T\int_0^{1}\left(\frac{\ptl f}{\ptl t}(x,t)\right)^2+(u(x,t)-u_0(x))^2dxdt.
\end{align*}
Now, \cite[Theorem~7.4.1]{TusWei} ensures that the pair $(A,C)$ is exactly observable, thus it is exponentially detectable. Thanks to Theorem \ref{maininf}, we deduce that the optimal control problem \eqref{OCP} satisfies the exponential turnpike property at the optimal steady state given by Remark \ref{exanduniofossp}. {In particular, if we let $f^*$ and $u^*$ denote the optimal trajectory and optimal control minimizing $J^T$, then the turnpike property implies that $\frac{df}{dt}(\cdot,t)$ will be close to $0$ in the $L^2$ sense for most of the time horizon, and $f^*(\cdot,t)$ and $u^*(\cdot,t)$ will stay close to the optimal steady state predicted by Remark \ref{exanduniofossp} with respect to the inner product \eqref{innerproduct} and $L^2$ inner product, respectively. This example can be also generalized to the multidimensional case. See \cite[Theorem~7.4.1]{TusWei}.}
\end{example}

\section{Conclusions}
In this paper we derive a sufficient condition of the exponential (integral) turnpike property in the context of infinite-dimensional generalized linear-quadratic optimal control problems. This condition extends the previous results~\cite{GruG21} in a finite dimensional setting and reveals the connection between the structural properties of the control system and the quantitative turnpike behavior of the optimally controlled system over a finite time horizon. Our proof is primarily based on the analysis of the exponential convergence of solutions to the differential Riccati equations to the algebraic counterpart, as first considered in~\cite{Zua}, and on a novel closed range test for stabilizability.

\bibliographystyle{abbrv}
\bibliography{references.bib}

\begin{thebibliography}{10}

\bibitem{Bensou}
A.~Bensoussan, G.~Prato, M.~C. Delfour, and S.~K. Mitter.
\newblock {\em Representation and control of infinite dimensional systems}.
\newblock Systems \& control. Birkh\"auser, Boston, 2nd ed. edition, 2007.

\bibitem{DELAY}
D.~A. Carlson.
\newblock The existence of catching-up optimal solutions for a class of infinite horizon optimal control problems with time delay.
\newblock {\em SIAM Journal on Control and Optimization}, 28(2):402--422, 1990.

\bibitem{INFICAR}
D.~A. Carlson, A.~Haurie, and A.~Jabrane.
\newblock Existence of overtaking solutions to infinite dimensional control problems on unbounded time intervals.
\newblock {\em SIAM Journal on Control and Optimization}, 25(6):1517--1541, 1987.

\bibitem{CurtainZwart}
R.~F. Curtain and H.~Zwart.
\newblock {\em An introduction to infinite-dimensional linear systems theory}, volume~21.
\newblock Springer Science \& Business Media, 1995.

\bibitem{discreteexp}
T.~Damm, L.~Gr\"{u}ne, M.~Stieler, and K.~Worthmann.
\newblock An exponential turnpike theorem for dissipative discrete time optimal control problems.
\newblock {\em SIAM Journal on Control and Optimization}, 52(3):1935--1957, 2014.

\bibitem{FINDIS}
T.~Damm, L.~Gr\"{u}ne, M.~Stieler, and K.~Worthmann.
\newblock An exponential turnpike theorem for dissipative discrete time optimal control problems.
\newblock {\em SIAM Journal on Control and Optimization}, 52(3):1935--1957, 2014.

\bibitem{FINCON}
T.~Faulwasser, M.~Korda, C.~N. Jones, and D.~Bonvin.
\newblock On turnpike and dissipativity properties of continuous-time optimal control problems.
\newblock {\em Automatica}, 81:297--304, 2017.

\bibitem{Grue13}
L.~Gr{\"u}ne.
\newblock Economic receding horizon control without terminal constraints.
\newblock {\em Automatica}, 49(3):725--734, 2013.

\bibitem{GruG18}
L.~Gr\"une and R.~Guglielmi.
\newblock Turnpike properties and strict dissipativity for discrete time linear quadratic optimal control problems.
\newblock {\em SIAM J. Cont. Optim.}, 56(2):1282--1302, 2018.

\bibitem{GruG21}
L.~Gr{\"u}ne and R.~Guglielmi.
\newblock On the relation between turnpike properties and dissipativity for continuous time linear quadratic optimal control problems.
\newblock {\em Mathematical Control \& Related Fields}, 11(1):169, 2021.

\bibitem{GruM16}
L.~Gr\"une and M.~A. M\"uller.
\newblock On the relation between strict dissipativity and the turnpike property.
\newblock {\em Syst. Contr. Lett.}, 90:45--53, 2016.

\bibitem{RHC}
L.~Grüne.
\newblock Approximation properties of receding horizon optimal control.
\newblock {\em Jahresbericht der Deutschen Mathematiker-Vereinigung}, 118:3--37, 03 2016.

\bibitem{survey}
L.~Grüne.
\newblock Dissipativity and optimal control: Examining the turnpike phenomenon.
\newblock {\em IEEE Control Systems Magazine}, 42(2):74--87, 2022.

\bibitem{FINNON}
L.~Grüne and M.~A. Müller.
\newblock On the relation between strict dissipativity and turnpike properties.
\newblock {\em Systems \& Control Letters}, 90:45--53, 2016.

\bibitem{LFM}
L.~Grüne, F.~Philipp, and M.~Schaller.
\newblock Strict dissipativity for generalized linear-quadratic problems in infinite dimensions.
\newblock {\em IFAC-PapersOnLine}, 55(30):311--316, 2022.
\newblock 25th International Symposium on Mathematical Theory of Networks and Systems MTNS 2022.

\bibitem{BIO}
A.~Ibañez.
\newblock Optimal control of the lotka–volterra system: turnpike property and numerical simulations.
\newblock {\em Journal of Biological Dynamics}, 11(1):25--41, 2017.
\newblock PMID: 27642712.

\bibitem{thesis}
Z.~Li.
\newblock Turnpike property for generalized linear quadratic optimal control problem.
\newblock Master's thesis, University of Waterloo, Canada, 2023.

\bibitem{Boundary}
Z.~Li and R.~Guglielmi.
\newblock Strict dissipativity and turnpike for {LQ} optimal control problems with possibly boundary reference.
\newblock 2023.
\newblock DOI: https://doi.org/10.48550/arXiv.2308.14609.

\bibitem{Lions}
J.~L. Lions.
\newblock {\em Optimal control of systems governed by partial differential equations}.
\newblock Die Grundlehren der mathematischen Wissenschaften in Einzeldarstellungen mit besonderer Ber\"ucksichtigung der Anwendungsgebiete ; Bd. 170. Springer, Berlin, 1971.

\bibitem{OPREL}
{Lou, Hongwei} and {Wang, Weihan}.
\newblock Turnpike properties of optimal relaxed control problems.
\newblock {\em ESAIM: COCV}, 25:74, 2019.

\bibitem{MCKENZIE}
L.~W. McKenzie.
\newblock Chapter 26 optimal economic growth, turnpike theorems and comparative dynamics.
\newblock In {\em Handbook of Mathematical Economics}, volume~3, pages 1281--1355. Elsevier, 1986.

\bibitem{Neumann}
J.~v. Neumann.
\newblock { A Model of General Economic Equilibrium 1}.
\newblock {\em The Review of Economic Studies}, 13(1):1--9, 04 1945.

\bibitem{Zua}
A.~Porretta and E.~Zuazua.
\newblock Long time versus steady state optimal control.
\newblock {\em SIAM journal on control and optimization}, 51(6):4242--4273, 2013.

\bibitem{CHE}
J.~B. Rawlings and R.~Amrit.
\newblock {\em Optimizing Process Economic Performance Using Model Predictive Control}, pages 119--138.
\newblock Springer Berlin Heidelberg, Berlin, Heidelberg, 2009.

\bibitem{SAMUELSON}
P.~SAMUELSON.
\newblock A catenary turnpike theorem involving consumption and the golden rule.
\newblock {\em The American economic review}, 55(3):486--496, 1965.

\bibitem{SAMUELSONperiodic}
P.~Samuelson.
\newblock The periodic turnpike theorem.
\newblock {\em Nonlinear Analysis: Theory, Methods \& Applications}, 1(1):3--13, 1976.

\bibitem{TreZua}
E.~Tr\'{e}lat, C.~Zhang, and E.~Zuazua.
\newblock Steady-state and periodic exponential turnpike property for optimal control problems in hilbert spaces.
\newblock {\em SIAM Journal on Control and Optimization}, 56(2):1222--1252, 2018.

\bibitem{MEAINT}
E.~Trélat and C.~Zhang.
\newblock Integral and measure-turnpike properties for infinite-dimensional optimal control systems.
\newblock {\em Mathematics of control, signals, and systems}, 30(1):1--34, 2018.

\bibitem{TreZuaFIN}
E.~Trélat and E.~Zuazua.
\newblock The turnpike property in finite-dimensional nonlinear optimal control.
\newblock {\em Journal of Differential Equations}, 258(1):81--114, 2015.

\bibitem{TusWei}
M.~Tucsnak and G.~Weiss.
\newblock {\em Observation and control for operator semigroups}.
\newblock Birkh\"auser advanced texts. Birkh\"auser, Basel, 2009.

\bibitem{Will72a}
J.~C. Willems.
\newblock Dissipative dynamical systems. {I}. {G}eneral theory.
\newblock {\em Arch. Rational Mech. Anal.}, 45:321--351, 1972.

\bibitem{Will72b}
J.~C. Willems.
\newblock Dissipative dynamical systems. {II}. {L}inear systems with quadratic supply rates.
\newblock {\em Arch. Rational Mech. Anal.}, 45:352--393, 1972.

\bibitem{Periodicstrict}
M.~Zanon, L.~Grune, and M.~Diehl.
\newblock Periodic optimal control, dissipativity and mpc.
\newblock {\em IEEE transactions on automatic control}, 62(6):2943--2949, 2017.

\bibitem{ZAS06}
A.~J. Zaslavski.
\newblock {\em Turnpike properties in the calculus of variations and optimal control}.
\newblock Nonconvex optimization and its applications ; v. 80. Springer, New York, 2006.

\bibitem{ZAS14}
A.~J. Zaslavski.
\newblock {\em Turnpike phenomenon and infinite horizon optimal control}.
\newblock Springer optimization and its applications ; 99. Springer, Cham, 2014.

\bibitem{ZAS15}
A.~J. Zaslavski.
\newblock {\em Turnpike Theory of Continuous-Time Linear Optimal Control Problems}.
\newblock Springer Optimization and Its Applications, 104. Springer International Publishing, Cham, 1st ed. 2015. edition, 2015.

\end{thebibliography}

\appendix
\section{An extension to Riesz's representation theorem}\label{appenA}
In the proof of Lemma \ref{test}, we claim $f(x)=2\Re\<w,x\>$ for some $w\in \calH$. We now give a short proof of this.
\begin{thm}\label{Riesz}
Suppose $\calH$ is a complex Hilbert space and $f:\calH\sra\R$ is a bounded linear functional over $\R$, then $f(x)=2\Re\<x,w\>$ for some $w\in \calH$.
\end{thm}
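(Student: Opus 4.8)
The plan is to reduce the statement to the ordinary Riesz representation theorem by exploiting the real Hilbert space structure underlying $\calH$. The key observation is that the real part of the complex inner product turns $\calH$ into a \emph{real} Hilbert space, on which $f$ is precisely a bounded linear functional.

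First I would introduce the real-bilinear form $(x,y):=\Re\<x,y\>$ on $\calH$ and check that it is a genuine real inner product. It is symmetric, since $\Re\<x,y\>=\Re\ol{\<y,x\>}=\Re\<y,x\>$; it is $\R$-bilinear, since scaling by a real number commutes with taking real parts; and it is positive definite, since $(x,x)=\Re\<x,x\>=\|x\|^2$. In particular the norm induced by $(\cdot,\cdot)$ coincides with the original norm of $\calH$, so $(\calH,(\cdot,\cdot))$ is complete and hence a real Hilbert space.

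Next, since $f:\calH\to\R$ is linear over $\R$ and bounded with respect to this same norm, it is exactly a bounded linear functional on the real Hilbert space $(\calH,(\cdot,\cdot))$. The real version of the Riesz representation theorem then produces a unique $w_0\in\calH$ with $f(x)=(x,w_0)=\Re\<x,w_0\>$ for every $x\in\calH$. Setting $w:=w_0/2$ yields $f(x)=2\Re\<x,w\>$, the desired conclusion; the factor $2$ is merely absorbed into the choice of $w$ and reflects the normalization in the statement (which in turn matches its use in Lemma~\ref{test}).

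There is essentially no serious obstacle here: the entire content is the identification of $f$ as a functional on the real structure underlying $\calH$. The only points requiring care are the bookkeeping between the real and complex structures --- in particular verifying that $\Re\<\cdot,\cdot\>$ is positive definite and induces the same norm --- and the harmless factor of $2$. Should one prefer to invoke the \emph{complex} Riesz theorem directly, an alternative route is to show that $g(x):=f(x)-if(ix)$ defines a bounded $\C$-linear functional (the identity $g(ix)=ig(x)$ being the crucial check), to represent $g(x)=\<x,w'\>$, and then to take real parts, recovering $f(x)=\Re\<x,w'\>$ and hence $w=w'/2$.
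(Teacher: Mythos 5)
Your proof is correct, but your primary route differs from the paper's. The paper uses the complexification trick (the same device as in the complex Hahn--Banach theorem): it defines $g(x):=f(x)-f(ix)i$, checks that $g$ is a bounded $\C$-linear functional, applies the \emph{complex} Riesz representation theorem to write $g(x)=\<x,2w\>$, and recovers $f(x)=\tfrac{1}{2}\bigl(g(x)+\ol{g(x)}\bigr)=2\Re\<x,w\>$. You instead realify the \emph{space}: you observe that $(x,y):=\Re\<x,y\>$ is a real inner product inducing the same norm, so $(\calH,(\cdot,\cdot))$ is a real Hilbert space on which $f$ is an ordinary bounded linear functional, and you then invoke the \emph{real} Riesz theorem. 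Both arguments are complete and equally short; yours avoids verifying $\C$-linearity of an auxiliary functional, at the cost of checking that $\Re\<\cdot,\cdot\>$ is positive definite and induces the original norm (so that completeness transfers), while the paper's stays entirely inside the given complex structure and leans on the complex Riesz theorem as a black box. The alternative you sketch in your final sentences --- representing $g(x):=f(x)-if(ix)$ and taking real parts --- is precisely the paper's proof, so you have in effect supplied both arguments.
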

\begin{proof}
It's easy to verify $g(x):=f(x)-f(ix)i$ is a bounded linear functional over $\C$. By Riesz's representation theorem, we have $g(x)=\<x,2w\>$ for some $w\in\calH$. So, 
\begin{align*}
f(x)=\frac{g(x)+\ol{g(x)}}{2}=2\Re\<x,w\>.
\end{align*}
\end{proof}
\section{A counterexample to the existence of the adjoint steady state}\label{appenB}
In this section, we provide a counterexample showing that, in general, the existence of a unique minimizer of the optimal steady state problem does not guarantee the existence of the adjoint steady state. 
More precisely, we claim that the existence of a unique minimizer $(x_e,u_e)$ for the optimal steady state problem~\eqref{OSSP} does not ensure the existence of an element $w\in \calH$ such that
\begin{equation}\label{Lagu}
u_e=(K^*K)^{-1}B^*w-(K^*K)^{-1}v
\end{equation}
and
\begin{equation}\label{Lagran}
\begin{split}
\left\{
\begin{alignedat}{4}
&Ax_e+B(K^*K)^{-1}B^*w-B(K^*K)^{-1}v=0,\\
&C^*Cx_e+z-A^*w=0.
\end{alignedat}
\right.
\end{split}
\end{equation}

{Notice that \eqref{Lagu} and \eqref{Lagran} are obtained from the infinite dimensional analogy of the classic Lagrange multiplier approach, which has been utilized in \cite{TreZua}. Indeed, the vector $w$ corresponds to the optimal adjoint steady state in \cite{TreZua}. 
Our counterexample shows that the Lagrange multiplier approach fails in the general setting considered in this paper.}

Let $\calH=\calU=\calY=\ell^{\infty}$, $A=0$ and $C=K=I$. Let $B$ be defined by
$$
B(x_1,x_2,x_3,...):=(x_1,\frac{x_2}{2},\frac{x_3}{3},...)\, ,
$$
and let $z=0$ and $v=(1,\frac{1}{\sqrt{2}},\frac{1}{\sqrt{3}},...)$. {Then, the optimal steady state problem \eqref{OSSP} admits a unique minimizer $(x_e,u_e)=(0,0)$. {In fact, since $A = 0$, we must have $u_e=0$ while $x_e$ can be arbitrarily chosen. Since $C=I$ and $z=0$, $x_e=0$ minimizes running cost \eqref{cost}.}

We now check the existence of $w=(w_1,w_2,w_3,...)$. The first line of \eqref{Lagran} now reads
\begin{equation}\label{CounterB}
0+BB^*w-Bv=0.
\end{equation}
Since $B$ is self-adjoint, \eqref{CounterB} is equivalent to
\begin{equation}
(\frac{1}{1^2}w_1,\frac{1}{2^2}w_2,\frac{1}{3^2}w_3,...)=(\frac{1}{1}\frac{1}{\sqrt{1}},\frac{1}{2}\frac{1}{\sqrt{2}},\frac{1}{3}\frac{1}{\sqrt{3}},..).
\end{equation}
So, $w=(\sqrt{1},\sqrt{2},\sqrt{3},...)$, which is not an element in $\calH$. In fact, since our $A$ is bounded, we have $\calH=\calH_{-1}^d$, so $w$ is not even an element in $\calH_{-1}^d$.

{As we can see, the above optimal steady state problem does admit a unique minimizer $(0,0)$, but there exists no corresponding optimal adjoint steady state $w$ characterized by \eqref{Lagu} and \eqref{Lagran}. This issue has been properly resolved in Section~\ref{closedrange}, where we relied on the stabilizability assumption to get a closed range condition.}
}
\end{ietbody}
\end{document}